\newcommand{\K}{\mathcal{K}}
\newcommand{\A}{\mathbf{A}}
\renewcommand{\Re}{\mathds{R}}
\newcommand{\I}{\mathbf{I}}
\newcommand{\V}{\mathbf{V}}
\newcommand{\J}{\mathbf{J}}
\newcommand{\one}{\mathbf{1}}
\newcommand{\Hb}{\mathbf{H}}
\newcommand{\Lb}{\mathbf{L}}
\newcommand{\Ub}{\mathbf{U}}
\newcommand{\Pb}{\mathbf{P}}
\newcommand{\h}{\mathbf{h}}
\newcommand{\Kb}{\mathbf{K}}
\newcommand{\Yb}{\mathbf{Y}}
\newcommand{\Rb}{\boldsymbol{\rho}}
\newcommand{\alphaB}{\boldsymbol{\alpha}}
\newcommand{\gammaB}{\boldsymbol{\gamma}}
\newcommand{\betaB}{\boldsymbol{\beta}}
\newtheorem{theorem}{Theorem}
\newtheorem{remark}{Remark}
\newtheorem{lemma}{Lemma}
\newtheorem{corollary}{Corollary}
\newproof{proof}{Proof}
\begin{document}
\begin{frontmatter}
  \title{Adaptive Krylov-Type Time Integration Methods}
  \author[csl,llnl]{Paul Tranquilli\corref{cor}}
  \ead{tranquilli1@llnl.gov}
  \cortext[cor]{Corresponding author}
  \address[csl]{Computational Science Laboratory, Department of Computer Science, Virginia Tech.  Blacksburg, Virginia 24060}
  \address[llnl]{Lawrence Livermore National Laboratory. Livermore, California 94550}
  \author[csl]{Ross Glandon}
  \ead{rossg42@vt.edu}  
  \author[csl]{Adrian Sandu}
  \ead{sandu@cs.vt.edu}

\begin{abstract}
The Rosenbrock-Krylov family of time integration schemes is an extension of Rosenbrock-W methods that employs a specific Krylov based approximation of the linear system solutions arising within each stage of the integrator.  This work proposes an extension of Rosenbrock-Krylov methods to address stability questions which arise for methods making use of inexact linear system solution strategies.  Two approaches for improving the stability and efficiency of Rosenbrock-Krylov methods are proposed, one through direct control of linear system residuals and the second through a novel extension of the underlying Krylov space to include stage right hand side vectors.  Rosenbrock-Krylov methods employing the new approaches show a substantial improvement in computational efficiency relative to prior implementations. 
\end{abstract}
\begin{keyword}
ODEs, Rosenbrock, Adaptive, Krylov, Time Integration
\end{keyword}
\end{frontmatter}

\tableofcontents

\section{Introduction}
\label{Radv:sec:intro}

This work is concerned with solving initial value problems:
\begin{equation}
\label{Radv:eqn:ode}
 \frac{dy}{dt} = f(t,y)\,,~~~ t_0 \leq t \leq t_F\,, ~~~ y(t_0) = y_0\,; \quad  y(t) \in \Re^N,\, \quad f : \Re \times \Re^{N} \to \Re^N.
\end{equation}
We consider numerical discretizations of \eqref{Radv:eqn:ode} the following general form:
\begin{subequations}
 \label{Radv:eqn:genformW}
 \begin{eqnarray}
  \label{Radv:eqn:genformWstage}
   k_i & = & \varphi(h\gamma \A_n)\left(h\, F_i + h\A_n\displaystyle\sum_{j=1}^{i-1}\gamma_{i,j}k_j\right), \\
  \label{Radv:eqn:genformWrhs}
   F_i & = & f\left( y_n +  \displaystyle\sum_{j=1}^{i-1} \alpha_{i,j}k_j \right), \\
  \label{Radv:eqn:genformWynew}
   y_{n+1} & = & y_n +  \displaystyle\sum_{i=1}^s b_i k_i.
  \end{eqnarray}
\end{subequations}
where $\A_n$ is either the exact Jacobian matrix, $\J_n$, coming from the right-hand-side vector $f(t,y)$
\begin{equation}
\J_n := \left.\frac{\partial f}{\partial y}\right|_{t = t_n},
\end{equation}
or an approximation of it.  

The general form \eqref{Radv:eqn:genformW} encapsulates explicit Runge-Kutta methods when $\varphi(z) = 1$, Rosenbrock methods when $\varphi(z) = 1/(1-z)$, and exponential-Rosenbrock methods when $\varphi(z) = (e^{z}-1)/z$.
When the coefficients $\alpha_{i,j}$, $\gamma_{i,j}$, and $b_i$ are chosen such that $\A_n$ must equal the Jacobian matrix $\J_n$, the scheme \eqref{Radv:eqn:genformW} is known as either a classical Rosenbrock or exponential-Rosenbrock method depending on the choice of $\varphi(z)$.  Alternatively, the coefficients can be chosen such that the matrix $\A_n$ can be any arbitrary approximation of the Jacobian $\J_n$, and in this case the scheme \eqref{Radv:eqn:genformW} is known as either a Rosenbrock-W or exponential-Rosenbrock-W method depending once again on the choice of $\varphi(z)$.

The use of W-methods is convenient when the exact Jacobian $\J_n$ is difficult, or particularly expensive, to compute, or when iterative linear system solution methods are preferred \cite{Tranquilli2017}.  The drawback of this approach is a dramatic increase in the number of required order conditions imposed on the coefficients $\alpha_{i,j}$, $\gamma_{i,j}$, and $b_i$ in order to achieve the desired accuracy \cite{Hairer_book_II}.  For this reason, among others, we focus our attention on an alternative formulation of \eqref{Radv:eqn:genformW} called Rosenbrock-Krylov \cite{Tranquilli_2014_ROK} or exponential-Krylov \cite{Tranquilli_2014_expK} methods, in which the coefficients are chosen to permit a specific, Krylov-based, approximation $\A_n$ of the Jacobian matrix $\J_n$.  These methods have been shown to be at least efficient for a broad spectrum of problems through comparisons of relative performance against an assortment of standard integrators \cite{Tranquilli_2014_ROK, Sandu_2017_EPIRK,Tranquilli_2014_expK,Tranquilli2017,Wu_2016_ROK4E}, and have been effectively used to study engineering problems in  \cite{Bravo_2018_transcritical, wu_2018_efficient,wu_2018_application,Sarshar_2017_numerical}.

Stability analysis for W-methods with arbitrary matrices is still an open problem.  Stability questions for the case when $\Vert \J_n - \A_n \Vert$ is small are considered in \cite[Section IV.11]{Hairer_book_II}.  Similarly, exponential schemes are trivially $A$-stable when the $\varphi(\J_n)$ functions are evaluated exactly, however when these functions are evaluated using approximate methods this property is lost.  Quantifying the impact of the approximation of $\varphi(\J_n)$ on the stability of exponential methods is closely related to the stability questions for W-methods and also remains open.

K-methods are similarly plagued with stability questions, though have the benefit of making use of a very specific formulation for the approximate Jacobian $\A_n$.  In \cite{Buttner_1995_partitioning}, stability for W-methods making use of the Krylov based approximation, similar to that used for K-methods, is examined through the lens of contractivity for semilinear problems.  Here we examine the linear stability of K-methods and present two practical approaches for improving it.

The remainder of the paper is organized as follows. Section \ref{Radv:sec:kmethods} provides a brief overview of K-methods. Section \ref{Radv:sec:stability} performs a linear stability analysis for K-methods, and  Section \ref{Radv:sec:controlresid} introduces two new adaptive strategies for improving the stability and efficiency  through the direct control of stage residuals. Numerical results are presented in Section \ref{Radv:sec:numerical}, and concluding remarks are given in Section \ref{Radv:sec:conclusions}.

\section{K-type Time Integration Methods}
\label{Radv:sec:kmethods}

Rosenbrock-Krylov methods have the same form as Rosenbrock-W methods \eqref{Radv:eqn:genformW}, but use a particular approximation $\mathbf{A}_n$ of $\mathbf{J}_n$. Without loss of generality we restrict the presentation to the case of autonomous systems. Let $f_n:=f(y_n)$ and consider the $M$-dimensional Krylov space:
\begin{eqnarray}
\label{Radv:eqn:krylovspace}
	\K_M(\mathbf{J}_n\,,f_n) &=& \textnormal{span}\left\{\, f_n, \, \mathbf{J}_n\,f_n, \, \mathbf{J}_n^2\,f_n, \dots, \mathbf{J}_n^{M-1}\,f_n\, \right\} = \textnormal{span}\left\{v_1, v_2,  \dots, v_M\right\} \,.
\end{eqnarray}
An orthogonal basis $\{ v_i\}_{i=1,\dots,M}$ for $\K_M$ is constructed using a modified Arnoldi process \cite{vanderVorst}. The Arnoldi iteration returns two valuable pieces of information: a matrix $\mathbf{V}$ whose columns are the orthonormal basis vectors of $\K_M$, and an upper Hessenberg matrix $\mathbf{H}$, such that 
\begin{equation}
\label{Radv:eqn:htoj}
\mathbf{V} = \bigl[ v_1,\dots v_M \bigr] \in \Re^{N \times M}\,; \quad  
	\mathbf{H} = \mathbf{V}^T \, \mathbf{J}_n \, \mathbf{V} \in \Re^{M \times M}\,.
\end{equation}
{\it The Krylov approximation matrix $\mathbf{A}_n$ is the restriction of the full ODE Jacobian to the Krylov space:}
\begin{equation}
\label{Radv:eqn:ROK-approximation}
	\mathbf{A}_n = \mathbf{V}\, \mathbf{H}\,  \mathbf{V}^T =  \mathbf{V}\, \mathbf{V}^T\, \mathbf{J}_n\,  \mathbf{V}\,  \mathbf{V}^T\,.
\end{equation}

A Rosenbrock-Krylov method leverages the Krylov approximation matrix \eqref{Radv:eqn:ROK-approximation}, within the context of a Rosenbrock-W framework \cite{Tranquilli_2014_ROK}.  Similarly, an exponential-Krylov method leverages the Krylov approximation matrix \eqref{Radv:eqn:ROK-approximation} within the exponential-W framework \cite{Tranquilli_2014_expK,Sandu_2017_EPIRK}.  The special structure of the Krylov approximation matrix leads to the reduced space form of the method as summarized in Algorithm \ref{Radv:alg:K-autonomous-step}.  The Krylov approximation matrix has the additional feature that $\A_n^k f_n = \J_n^k f_n$ when $k$ is smaller than the dimension of the Krylov subspace ($k \leq m$), leading to a significant reduction in the number of order conditions, and thus the number of required stages, compared to a W-type method.  A full presentation of the order condition theory for K-methods can be found in \cite{Tranquilli_2014_ROK, Tranquilli_2014_expK,Sandu_2017_EPIRK}.

\begin{algorithm}[H]
\caption{One step of an autonomous K-type integrator \cite{Tranquilli_2014_ROK,Tranquilli_2014_expK}}\label{Radv:alg:K-autonomous-step}
\begin{algorithmic}[1]
   \State Compute $\Hb$ and $\V$ using the $N$-dimensional Arnoldi process \cite{vanderVorst}
   \For{$i = 1,\dots,s$}\Comment{For each stage, in succession}
      \begin{eqnarray*}
       F_i &=& f\left( y_n + \displaystyle\sum_{j=1}^{i-1}\alpha_{i,j}k_j\right) \\
       \psi_i &=& \V^T\, F_i \\
      \lambda_i &=& \varphi\left(h\gamma \Hb\right)\, \left( h\psi_i + h \Hb \displaystyle\sum_{j=1}^{i-1} \gamma_{i,j} \lambda_j\right) \\
       k_i &=& \V\, \lambda_i + h\, (F_i - \V\, \psi_i)
     \end{eqnarray*}
   \EndFor
   \State $y_{n+1} = y_n + \displaystyle\sum_{i=1}^s b_i k_i$
\end{algorithmic}
\end{algorithm}
%

\section{Stability Analysis}
\label{Radv:sec:stability}

The standard approach for investigating the linear stability of a time integration scheme is through application of the method to the linear Dahlquist test equation so that a local recurrence relation for the solution at $t_{n+1}$ in terms of the solution at $t_n$ can be constructed:
\[
y' = \lambda\, y, \quad y_0 = 1 \quad \Rightarrow \quad y_{n+1} = R(h\lambda)\,y_n.
\]
The set
\begin{equation*}
S = \left\{ z \in \mathbb{C}; \left|R(z)\right|\leq 1\right\}
\end{equation*}
is called the stability region of the method \cite{Hairer_book_II}.

Unfortunately, this approach is not sufficient for examining the stability of Rosenbrock methods where $\A_n \neq \J_n$, such as W-methods and K-methods, or for exponential schemes where the $\varphi(\J_n)$ are computed approximately.  For these schemes, since in general $\J_n$ and $\A_n$ are not simultaneously diagonalizable, it is necessary to consider the vector-valued linear test problem:
\begin{equation}
\label{Radv:eqn:lineartestproblem}
y' = \J\, y, \quad y_0 = \one.
\end{equation}
Application of a W- or K-method to equation \eqref{Radv:eqn:lineartestproblem} leads to the local recurrence
\begin{equation*}
y_{n+1} = R(h\,\J, h\A)\,y_n\,,
\end{equation*}
so that the stability region is determined by those step sizes $h$ for which the spectral radius of the transfer matrix is bounded: $\rho(R) \le 1$.


Specifically, the $s$-stage Rosenbrock method \eqref{Radv:eqn:genformW} applied to the linear test problem \eqref{Radv:eqn:lineartestproblem} reads:
\begin{subequations}
\label{Radv:eqn:linearros}
\begin{eqnarray}
k_i & = & h\,\J \left(y_0 + \displaystyle\sum_{j=1}^{i-1} \alpha_{i,j} k_j\right) + \left(h\A \displaystyle\sum_{j=1}^{i} \gamma_{i,j}k_j \right), ~~ i=1,\dots,s, \\
y_{n+1} & = & y_n + \displaystyle\sum_{i=1}^s b_i k_i.
\end{eqnarray}
\end{subequations}

We define the coefficient matrices: 
\begin{equation*}
\alphaB = \begin{bmatrix}
	 0 & \cdots & \cdots & 0 \\
	 \alpha_{2,1} & 0 & \dots & \vdots \\
	 \vdots & \ddots & \ddots & \vdots \\
	 \alpha_{s,1} & \cdots & \alpha_{s,s-1} & 0
	 \end{bmatrix}, \quad
\gammaB = \begin{bmatrix}
	\gamma & 0 & \cdots & 0 \\
	\gamma_{2,1} & \gamma & 0 & \vdots \\
	\vdots & \ddots & \ddots & 0 \\
	\gamma_{s,1} & \cdots & \gamma_{s,s-1} & \gamma
	\end{bmatrix}, \quad
\betaB = \alphaB + \gammaB,
\end{equation*}
and the supervectors:
\begin{equation*}
\Kb_n = \begin{bmatrix} k_1 \\ \vdots \\ k_s \end{bmatrix}\in \Re^{N s}, \quad 
\Yb_n = \begin{bmatrix} y_n \\ \vdots \\ y_n \end{bmatrix} = \left(\one_{s} \otimes \I_{N} \right)\,y_n \in \Re^{N s}. 
\end{equation*}

The method \eqref{Radv:eqn:linearros} can be written in matrix notation as follows:
\begin{subequations}
\label{eqn:ROS-w-approximate-Jacobian}
\begin{eqnarray}
\label{eqn:ROS-w-approximate-Jacobian-s1}
\Kb_n & = & h\,\left(\I_s \otimes \J\right)\Yb_n + h\,\bigl[\left(\alphaB \otimes \J\right) + \left(\gammaB \otimes \A\right)\bigr]\Kb_n\\
\label{eqn:ROS-w-approximate-Jacobian-s2}
& = & h\,\left(\I_s \otimes \J\right)\Yb_n + h\,\left(\betaB \otimes \J\right)\Kb_n + \Rb_n, \\
y_{n+1} & = & y_n + \left(b^T \otimes \I_N\right)\, \Kb_n.
\end{eqnarray}
\end{subequations}
The stage equation \eqref{eqn:ROS-w-approximate-Jacobian-s2} makes exclusive use of the exact Jacobian $\J$ by adding the residual supervector:
\begin{equation}
\label{eqn:stage-residuals}
\Rb_n := h\,\gammaB \otimes (\A-\J)\,\Kb_n = \begin{bmatrix} r_{1} \\ \vdots \\ r_{s} \end{bmatrix} \in \Re^{N s},
\end{equation}
where $r_{i}$ is the $i$-th stage residual vector, to obtain the same intermediate stage values $\Kb_n$.

Isolating $\Kb_n$ in both \eqref{eqn:ROS-w-approximate-Jacobian-s1} and \eqref{eqn:ROS-w-approximate-Jacobian-s2}, leads to
\begin{eqnarray*}
\Kb_n & = & h\bigl[\I_{Ns} - \left(\alphaB \otimes h\,\J\right) - \left(\gammaB \otimes h\A\right)\bigr]^{-1}\left(\I_s\otimes\J\right)\Yb_n   \\
      & = &  h\,\left(\I_{Ns} - h\,\betaB\otimes\J\right)^{-1}\left(\I_s\otimes\J\right)\Yb_n + \left(\I_{Ns} - h\,\betaB\otimes\J\right)^{-1} \Rb_n.
\end{eqnarray*}
From here we construct an explicit formulation of the residual vector $\Rb_n$, and apply the useful linear algebra identity:\
\begin{equation}
\label{eqn:la-identity}
\begin{split}
& \bigl[\I_{Ns} - \left(\alphaB \otimes h\,\J\right) - \left(\gammaB \otimes h\A\right)\bigr]^{-1} - 
\bigl[\I_{Ns} - \left(\betaB \otimes h\,\J\right)\bigr]^{-1}\\
&= - \bigl[\I_{Ns} - \left(\betaB \otimes h\,\J\right)\bigr]^{-1}\, \bigl[\gammaB \otimes \left( h\,\J-h\A\right)\bigr]
\,
\bigl[\I_{Ns} - \left(\alphaB \otimes h\,\J\right) - \left(\gammaB \otimes h\A\right)\bigr]^{-1},
\end{split}
\end{equation}
to obtain
\begin{eqnarray*}
\Rb_n & = &
\left(\I_{Ns} - h\betaB\otimes\J\right)\,\bigl[\I_{Ns} - \left(\alphaB \otimes \J\right) - \left(\gammaB \otimes \A\right)\bigr]^{-1}\left(\I_s\otimes h \J\right)\Yb_n 
     -  \,\left(\I_s\otimes h \J\right)\Yb_n \\
\left(\I_{Ns} - \betaB\otimes h\,\J\right)^{-1} \Rb_n &=& 
     \left(\bigl[\I_{Ns} - \left(\alphaB \otimes \J\right) - \left(\gammaB \otimes \A\right)\bigr]^{-1} - \left(\I_{Ns} - h\betaB\otimes \J\right)^{-1}\right)\left(\I_s\otimes h\,\J\right)\Yb_n.\\
\end{eqnarray*}
The stability of the Rosenbrock-K method is then given by:
\begin{eqnarray*}
y_{n+1} & = & y_n + \left(b^T \otimes \I\right) \Kb_n \\
 & = & \left( \I_{Ns} + \left(b^T \otimes \I_N \right) \left(\I_{Ns} - \betaB\otimes h\,\J\right)^{-1}\left(\one_s \otimes h\,\J\right)\right)y_n + \left(b^T \otimes \I_N \right)\left(\I_{Ns} - \betaB\otimes h\,\J\right)^{-1} \Rb_n \\
& = & R\left(h\,\J\right)\, y_n + \left(b^T \otimes \I_N \right)\left(\bigl[\I_{Ns} - \left(\alphaB \otimes \J\right) - \left(\gammaB \otimes \A\right)\bigr]^{-1} - \left(\I_{Ns} - h\betaB\otimes \J\right)^{-1}\right)\left(\one_s\otimes h\,\J\right) y_n\\
& = & R\left(h\,\J\right)\,y_n + S\left(h\,\J, h\A\right)\,y_n\\
& = & \widetilde{R}\left(h\,\J,h\A\right)\, y_n.
\end{eqnarray*}
Here $R\left(h\,\J\right)$ is the stability function of the traditional Rosenbrock method with the same coefficients.
The effective stability function $\widetilde{R}\left(h\,\J,h\A\right)$ of the Rosenbrock-K method is the Rosenbrock stability function $R$ plus an additional term that depends on both $\J$ and $\A$. This additional term is the stage stability function $S\left(h\,\J, h\A\right)$.

The linear stability of the underlying Rosenbrock method does not guarantee the stability of the Rosenbrock-Krylov method
\[
\rho\big( R\left(h\,\J\right) \big) \le 1
\]
does not guarantee the stability of the Rosenbrock-Krylov method
\[
\rho\big( \widetilde{R}\left(h\,\J,h\A\right) \big) \le 1,
\]
unless the stage stability function $S(h\,\J,h\A)$ is small in some sense.  This function can be expressed explicitly as:
\[
\begin{split}
S\left(h\,\J, h\A\right)\,y_n &= -\left(b^T \otimes \I_N \right) 
\bigl[\I_{Ns} - \left(\betaB \otimes h\,\J\right)\bigr]^{-1}\, \bigl[\gammaB \otimes \left( h\,\J-h\A\right)\bigr]
\,
\bigl[\I_{Ns} - \left(\alphaB \otimes h\,\J\right) - \left(\gammaB \otimes h\A\right)\bigr]^{-1}
\left(\one_s\otimes h\,\J\right) y_n \\
&= -\left(b^T \otimes \I_N \right) 
\bigl[\I_{Ns} - \left(\betaB \otimes h\,\J\right)\bigr]^{-1}\, \bigl[\gammaB \otimes \left( h\,\J-h\A\right)\bigr]
\,
\Kb_n,
\end{split}
\]
or more generally as:
\begin{equation}
\label{eqn:stage-resid}
\begin{split}
S\left(h\,\J, h\A\right)\,y_n 
 & = -\left(b^T \otimes \I_N \right) 
\bigl[\I_{Ns} - \left(\betaB \otimes h\,\J\right)\bigr]^{-1}\, \bigl[\gammaB \otimes \left( h\,\J-h\A\right)\bigr]
\,
\bigl[\I_{Ns} - \left(\gammaB \otimes h\A\right)\bigr]^{-1}\,\mathbf{F}_n,
\end{split}
\end{equation}
where $\mathbf{F}_n$ are the stage function values \eqref{Radv:eqn:genformWrhs} concatenated for all stages concatenated:
\[
\mathbf{F}_n = \begin{bmatrix} F_1 \\ \vdots \\ F_s \end{bmatrix}.
\]
In later sections we will attempt to keep \eqref{eqn:stage-resid} under control through two different strategies.  The most obvious of these approaches, presented in Section \ref{Radv:sec:controlresid}, directly controls the residual $\Rb_n$ of the linear system solutions.  This can be accomplished by simply increasing the dimension of the underlying space on which the Krylov approximation matrix $\A$ is based, and has the effect of decreasing the projection error
\begin{equation*}
\Vert \J - \A \Vert = \Vert \J - \V\,\V^T\J\,\V\,\V^T\Vert,
\end{equation*}
and directly reducing the corresponding term in \eqref{eqn:stage-resid}.  The alternative approach, based on the specific form of the residuals presented later, attempts to control \eqref{eqn:stage-resid} by extending the Krylov space with the right hand side values at each internal stage \eqref{Radv:eqn:genformWrhs}.

%

%
\section{Adaptive Methods to Increase Rosenbrock-K Stability}
\label{Radv:sec:adaptive}

\subsection{Controlling the residual magnitude}
\label{Radv:sec:controlresid}

The most direct strategy for improving the stability of a Rosenbrock-Krylov method is by  controlling the residual magnitude at each stage of the time integration scheme.
\begin{theorem}[Stage residual of a Rosenbrock-Krylov method]
\label{Radv:thm:istageresidual}
When the time integration scheme \eqref{Radv:eqn:genformW} is implemented as in Algorithm \ref{Radv:alg:K-autonomous-step}, the residual of the $i^{\rm th}$ stage is given by:
\begin{equation}
\label{Radv:eqn:ithstageresid}
r_{M;i} = -\displaystyle\sum_{j=1}^{i} h^2\,\gamma_{i,j}\,\J_n\,\left(F_j - \V_M\psi_j\right) - h\, \h_{M+1,M}\,v_{M+1}\,e_M^T\displaystyle\sum_{j=1}^{i}\gamma_{i,j}\lambda_j,
\end{equation}
where $r_{M;i} := r_{i}$ in \eqref{eqn:stage-residuals} with the dimension of the underlying Krylov subspace $M = dim(\K_M)$ made explicit. 
\end{theorem}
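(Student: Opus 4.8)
The plan is to evaluate the $i$-th block of the residual supervector $\Rb_n$ from \eqref{eqn:stage-residuals} directly, one stage at a time, by inserting the reduced-space stage formula from Algorithm \ref{Radv:alg:K-autonomous-step} and then invoking two structural facts about the Krylov/Arnoldi construction. Since $\gammaB$ is lower triangular, $\gamma_{i,j}=0$ for $j>i$, and the definition \eqref{eqn:stage-residuals} gives
\[
r_{M;i} = h\sum_{j=1}^{i}\gamma_{i,j}\,(\A-\J_n)\,k_j ,
\]
where $\J_n$ is the exact current Jacobian appearing as $\J$ in \eqref{eqn:stage-residuals}. It therefore suffices to compute $(\A-\J_n)\,k_j$ for a single stage $j$ and then form the weighted sum.

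First I would substitute $k_j = \V_M\lambda_j + h\,(F_j - \V_M\psi_j)$ and apply $\A$ to the two pieces separately. Using orthonormality $\V_M^T\V_M=\I_M$ together with \eqref{Radv:eqn:ROK-approximation}, namely $\A = \V_M\Hb\,\V_M^T = \V_M\V_M^T\J_n\V_M\V_M^T$, the in-space part collapses to $\A\,\V_M\lambda_j = \V_M\Hb\lambda_j$. The key cancellation is that $\psi_j=\V_M^TF_j$ forces $F_j-\V_M\psi_j = (\I_N-\V_M\V_M^T)F_j$ to lie in the orthogonal complement of the Krylov space, so $\V_M^T(F_j-\V_M\psi_j)=0$ and hence $\A\,(F_j-\V_M\psi_j)=0$. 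Thus $\A\,k_j = \V_M\Hb\lambda_j$ exactly, with the orthogonal-complement contribution annihilated.

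Next I would apply $\J_n$ to the same two pieces. The essential tool here is the Arnoldi identity
\[
\J_n\,\V_M = \V_M\,\Hb + \h_{M+1,M}\,v_{M+1}\,e_M^T ,
\]
which isolates the single "leakage" direction $v_{M+1}$ that escapes the Krylov space, giving $\J_n\V_M\lambda_j = \V_M\Hb\lambda_j + \h_{M+1,M}\,v_{M+1}\,e_M^T\lambda_j$. Unlike $\A$, the operator $\J_n$ does not annihilate the orthogonal-complement part, which is simply carried along as $h\,\J_n(F_j-\V_M\psi_j)$. Subtracting, the $\V_M\Hb\lambda_j$ terms cancel and I am left with
\[
(\A-\J_n)\,k_j = -\,\h_{M+1,M}\,v_{M+1}\,e_M^T\lambda_j - h\,\J_n\,(F_j-\V_M\psi_j).
\]
Multiplying by $h\,\gamma_{i,j}$ and summing over $j=1,\dots,i$ then reproduces the two terms of \eqref{Radv:eqn:ithstageresid}.

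The argument is mostly bookkeeping once the two governing identities are in hand, so the only real obstacle is invoking the Arnoldi relation with the correct subdiagonal scalar $\h_{M+1,M}$ and leakage vector $v_{M+1}$, and recognizing that the projection identity $\A\,(\I_N-\V_M\V_M^T)=0$ is precisely what removes the orthogonal-complement contribution on the $\A$ side while leaving it intact on the $\J_n$ side. I would also track the factors of $h$ carefully: one comes from the residual definition \eqref{eqn:stage-residuals} and one from the $h\,(F_j-\V_M\psi_j)$ piece, together producing the $h^2$ in the first term while the leakage term retains a single $h$, matching \eqref{Radv:eqn:ithstageresid} exactly.
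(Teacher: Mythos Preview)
Your proof is correct and follows essentially the same route as the paper: both substitute the reduced-space stage formula $k_j=\V_M\lambda_j+h(F_j-\V_M\psi_j)$, invoke the Arnoldi identity $\J_n\V_M=\V_M\Hb+\h_{M+1,M}v_{M+1}e_M^T$, and exploit the orthogonal-complement structure of $F_j-\V_M\psi_j$. The only organizational difference is that the paper expands $r_{M;i}=k_i-hF_i-h\J_n\sum_j\gamma_{i,j}k_j$ and then cancels the in-space terms via the reduced-space recurrence $\lambda_i=h\psi_i+h\Hb\sum_j\gamma_{i,j}\lambda_j$, whereas you compute $(\A-\J_n)k_j$ directly for each $j$ and sum; the two are equivalent and your version is arguably a bit cleaner.
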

\begin{proof}
We begin with the formulation of the method in Algorithm \ref{Radv:alg:K-autonomous-step} as:
\begin{subequations}
\label{Radv:eqn:rokresid}
\begin{eqnarray}
\label{Radv:eqn:rokresidfullstage}
k_i & = & hF_i + h\,\J_n\displaystyle\sum_{j=1}^{i}\gamma_{i,j}k_j + r_{M;i} \\
\label{Radv:eqn:rokresidredstage}
\lambda_i & = & h\psi_i + h\Hb
\displaystyle\sum_{j=1}^{i}\gamma_{i,j}\lambda_j \\
\label{Radv:eqn:rokresidpsi}
\psi_i & = & \V^T\,F_i \\
\label{Radv:eqn:rokresidk}
k_i & = & \V\,\lambda_i + h\,\left(F_i - \V\,\psi_i\right).
\end{eqnarray}
\end{subequations}
Inserting \eqref{Radv:eqn:rokresidk} into \eqref{Radv:eqn:rokresidfullstage} and expanding leads to:
\begin{equation*}
r_{M;i} = \V\,\lambda_i - h\,\V\,\psi_i - h^2\J_n\displaystyle\sum_{j=1}^i\gamma_{i,j} \left(F_j - \V\,\psi_j\right) - h\,\J_n\,\V\displaystyle\sum_{j=1}^i\gamma_{i,j}\lambda_j.
\end{equation*}
Next, apply the Arnoldi relationship
\begin{equation}
\label{Radv:eqn:arnoldirelation}
\J_n\,\V = \V\,\Hb + \h_{M+1,M}\,v_{M+1}\,e_M^T,
\end{equation}
to the terms containing $\lambda_j$:
\begin{equation*}
r_{M;i} = \V\,\lambda_i - h\,\V\,\psi_i - h^2\J_n\displaystyle\sum_{j=1}^i \gamma_{i,j}\left(F_j - \V\,\psi_j\right) - h\,\V\,\Hb\displaystyle\sum_{j=1}^i\gamma_{i,j}\lambda_j - \h_{M+1,M}v_{M+1}e_M^T\displaystyle\sum_{j=1}^i \gamma_{i,j}\lambda_j
\end{equation*}
Collecting terms with \eqref{Radv:eqn:rokresidredstage} in mind, we obtain
\begin{equation*}
r_{M;i} = \V\underbrace{\left(\lambda_i - h\psi_i - h\Hb\displaystyle\sum_{j=1}^i\gamma_{i,j}\lambda_j\right)}_{=0} -  h^2\J_n\displaystyle\sum_{j=1}^i \gamma_{i,j}\left(F_j - \V\,\psi_j\right)- \h_{M+1,M}v_{M+1}e_M^T\displaystyle\sum_{j=1}^i \gamma_{i,j}\lambda_j,
\end{equation*}
and arrive at the final result \eqref{Radv:eqn:ithstageresid}.
 \qed
\end{proof}

Unfortunately, the residuals depend not only on the reduced stage solutions $\lambda_j$, but also on the full space matrix-vector products $\J_n\left(F_j-\V\,\psi_j\right)$.  The dependence on $\lambda_j$'s means that verifying the size of the residual in the final stage requires essentially computing the full space timestep, and so any strategy attempting to bound this residual will be computationally infeasible.  An alternative approach is to bound the residuals only in the first stage, so that only $\lambda_1$ is required. This has the computational benefit that $\left(F_1 - \V\,\psi_1\right) = 0$ since $F_1 \in \K_M$, and so no full space matrix-vector products are required.

\begin{corollary}[Residual of the first stage of a Rosenbrock-Krylov method]
When the time integration scheme \eqref{Radv:eqn:genformW} is implemented as in Algorithm \ref{Radv:alg:K-autonomous-step}, then the residual of the first stage is given by:
\begin{equation*}
r_{M;1} = -h\,\gamma\,\h_{M+1,M}\,v_{M+1}\,e_M^T\,\lambda_1,
\end{equation*}
with norm
\begin{equation*}
\Vert r_{M;1} \Vert = \left| \,h\,\gamma\,\h_{M+1,M}\,\right|\cdot\left|\,e_M^T\,\lambda_1 \, \right|.
\end{equation*}
\end{corollary}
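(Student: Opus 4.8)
The plan is to specialize the stage-residual formula \eqref{Radv:eqn:ithstageresid} of Theorem \ref{Radv:thm:istageresidual} to the case $i=1$ and then exploit the fact that the first stage right-hand side already lies in the Krylov space. Setting $i=1$ in \eqref{Radv:eqn:ithstageresid} collapses both summations to their single $j=1$ term, and since $\gamma_{1,1}=\gamma$ by the definition of the coefficient matrix $\gammaB$, I obtain
\begin{equation*}
r_{M;1} = -h^2\,\gamma\,\J_n\,\left(F_1 - \V_M\,\psi_1\right) - h\,\gamma\,\h_{M+1,M}\,v_{M+1}\,e_M^T\,\lambda_1.
\end{equation*}

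The crucial step is to show that the first term vanishes. Since $F_1 = f(y_n) = f_n$ is by construction the seed vector of the Krylov space $\K_M(\J_n,f_n)$, it lies in the span of the orthonormal columns of $\V_M$. Recalling that $\psi_1 = \V_M^T\,F_1$ and that $\V_M\,\V_M^T$ is the orthogonal projector onto $\K_M$, I have $\V_M\,\psi_1 = \V_M\,\V_M^T\,F_1 = F_1$, so that $F_1 - \V_M\,\psi_1 = 0$. This eliminates the full-space matrix-vector product $\J_n\left(F_1 - \V_M\,\psi_1\right)$ and leaves
\begin{equation*}
r_{M;1} = -h\,\gamma\,\h_{M+1,M}\,v_{M+1}\,e_M^T\,\lambda_1,
\end{equation*}
which is the claimed residual.

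For the norm, I would use that $v_{M+1}$ is a unit vector produced by the Arnoldi process, so $\Vert v_{M+1}\Vert = 1$, while $e_M^T\,\lambda_1$ and the prefactor $h\,\gamma\,\h_{M+1,M}$ are scalars. Pulling the scalar factors out of the norm then gives $\Vert r_{M;1}\Vert = \left|\,h\,\gamma\,\h_{M+1,M}\,\right|\cdot\left|\,e_M^T\,\lambda_1\,\right|\cdot\Vert v_{M+1}\Vert = \left|\,h\,\gamma\,\h_{M+1,M}\,\right|\cdot\left|\,e_M^T\,\lambda_1\,\right|$.

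I do not anticipate a genuine obstacle: the statement is an immediate corollary once the projection identity $F_1 = \V_M\,\psi_1$ is recognized. The only point requiring care is to justify that identity clearly — namely that the first stage evaluates $f$ at $y_n$, whose image is precisely the vector generating the Krylov subspace, so its projection error is exactly zero. This is exactly the computational advantage the corollary exploits, since it shows the first-stage residual can be monitored cheaply from the reduced quantity $\lambda_1$ and the Arnoldi byproduct $\h_{M+1,M}$ alone, without forming any full-space products.
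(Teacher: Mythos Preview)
Your proposal is correct and follows exactly the approach the paper takes: the paper does not give a separate formal proof of the corollary, but in the discussion immediately preceding it notes that $(F_1 - \V\,\psi_1) = 0$ since $F_1 \in \K_M$, which is precisely the projection identity you invoke to kill the first term of \eqref{Radv:eqn:ithstageresid}. Your justification of the norm via $\Vert v_{M+1}\Vert = 1$ is also the intended (if unstated) reasoning.
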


It is then possible to implement a Rosenbrock-Krylov method which automatically determines the dimension $M$ of the Krylov subspace such as to control the size of the residual in the first stage through the use of an adaptive Arnoldi process. This method is summaruzed in Algorithm \ref{Radv:alg:adaptive-arnoldi}. The computational cost of computing the residuals can be reduced by only computing them for a subset of iterations.  For example it was proposed in \cite{Hochbruck_1998_exp} to compute the residuals only for the test indices $i \in \left\{1,2,3,4,6,8,11,15,20,27,36,48\right\}$.
\begin{algorithm}[h]
\caption{Modified Arnoldi iteration \cite{Saad}}\label{Radv:alg:adaptive-arnoldi}
\begin{algorithmic}[l]
   \State $\beta = \left\Vert f_n \right\Vert~$, $\quad  \mathbf{V}_1 = f_n/\beta~$.
   \For{$i=1,\dots,M$}
       \State $\displaystyle \zeta =  \mathbf{J}_n \, v_i$  
      \For{$j = 1,\dots,i$} 
         \State $\h_{j,i} = \left\langle \zeta , \mathbf{V}_j \right\rangle$
         \State $\displaystyle \zeta = \zeta  - \h_{j,i}\, \mathbf{V}_j$
      \EndFor
      \State $\displaystyle \h_{i+1,i} = \left\Vert \zeta \right\Vert$
      \State $\displaystyle \mathbf{V}_{i+1} = \zeta/\h_{i+1,i}$
      \If{ $i \in $ testIndices }
	      \State $\lambda_1 = \left(\I - h\gamma\Hb_i\right)^{-1}f_n$
     	 \If{$\left|h\gamma\h_{i+1,i}\right|\left|e_M^T\lambda_1\right| \leq TOL$}
      		\State \textbf{return}
	 \EndIf
      \EndIf
   \EndFor
\end{algorithmic}
\label{Radv:alg:arnoldi}
\end{algorithm}
%

\subsection{Adding vectors from outside the Krylov space}
\label{Radv:sec:outsidebasis}

An alternative to increasing the size of the Krylov subspace in the very beginning is to extend the subspace with vectors from other sources, not necessarily from the same Arnoldi sequence.  

Consider the Krylov space $\K_M$ with basis $\V_M$ and the upper Hessenberg matrix $\Hb_M$.
In order to extend the Krylov basis with arbitrary vectors $b_1, \dots, b_r$, one forms an extended basis set $\V_{M+r}$ through the same orthonormalization procedure present in the Arnoldi iteration:
\begin{equation*}
\V_{M+r} = \begin{bmatrix} v_1, v_2, \dots, v_M, \bar{v}_1, \dots, \bar{v}_r\end{bmatrix},
\quad
\V_{M+r}^T\,\V_{M+r}  = \I_{M+r}.
\end{equation*}
The extended matrix $\Hb_{M+r} \in \Re^{(m+r)\times(m+r)}$ is constructed such as to maintain the relationship $\Hb_{M+r} = \V_{M+r}^T\,\J_n\, \V_{M+r}$. We have that:
\begin{equation*}
\Hb_{M+r} = \begin{bmatrix} \begin{bmatrix} \Hb_M \\ 0_{r \times 1} \end{bmatrix}
& h_{M+1} & \dots & h_{M+r} \end{bmatrix},
\end{equation*}
with the additional columns given by:
\begin{equation*}
h_{M+i} = \V_{M+r}^T\,\left(\J_n\, \bar{v}_i\right) \quad  \textrm{for } i = 1,\dots,r.
\end{equation*}

\begin{remark}
In previous sections, the Arnoldi relationship \eqref{Radv:eqn:arnoldirelation} is written in terms of $v_{M+1}$, the next vector which would be added to the Krylov basis $\V_M$, and $\h_{M+1,M}$ a coefficient corresponding to $v_{M+1}$ which would be added to the upper-Hessenberg matrix $\Hb_M$. 
Using the definition of the Arnoldi process we have that
\begin{equation*}
\h_{M+1,M}\,v_{M+1} = \left(\I - \V_M\V_M^T\right)\,\J_n\, v_M,
\end{equation*}
where $v_M$ is the last column in the Krylov basis matrix $\V_M$. With this, one can rewrite the Arnoldi relationship \eqref{Radv:eqn:arnoldirelation} as follows:
\begin{equation}
\label{Radv:eqn:arnoldirelationalt}
\J_n\, \V_M = \V_M\, \Hb_M + \left(\I - \V_M\,\V_M^T\right)\,\J_n\, v_M\, e_M^T.
\end{equation}
\end{remark}

\begin{lemma}[Arnoldi-like relationship for extended basis]
\label{Radv:lem:arnoldirelationext}
When the Krylov basis matrix $\V_M$ is extended with orthonormal (but otherwise arbitrary) vectors $\bar{v}_1$, $\bar{v}_2$, $...$, $\bar{v}_r$, the Arnoldi relationship \eqref{Radv:eqn:arnoldirelationalt} extendeds to the new basis as follows:
\begin{equation}
\label{Radv:eqn:arnoldirelationext}
\J_n \, \V_{M+r} = \V_{M+r} \Hb_{M+r} + \left(\I - \V_M\,\V_M^T\right)\,\J_n\, v_M\, e_M^T + \left(\I - \V_{M+r} \V_{M+r}^T\right)\,\J_n\, \sum_{i=1}^r \bar{v}_i\, e_{M+i}^T,
\end{equation}
where $e_{M+r,j}$ is the $j$'th canonical basis vector of $\Re^{(M+r)}$. 
\end{lemma}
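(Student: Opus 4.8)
The plan is to prove the identity blockwise, exploiting the orthonormal partition $\V_{M+r} = [\,\V_M \mid \bar{\V}\,]$, where $\bar{\V} := [\bar{v}_1,\dots,\bar{v}_r]$, together with the defining Galerkin relation $\Hb_{M+r} = \V_{M+r}^T\,\J_n\,\V_{M+r}$ and the orthonormality $\V_{M+r}^T\V_{M+r} = \I$. First I would multiply out $\V_{M+r}\,\Hb_{M+r}$ using the stated block structure of $\Hb_{M+r}$. Because its bottom-left $r\times M$ block vanishes, the first $M$ columns of $\V_{M+r}\,\Hb_{M+r}$ collapse to $\V_M\,\Hb_M$; and since $h_{M+i}=\V_{M+r}^T(\J_n\bar{v}_i)$, the last $r$ columns equal $\V_{M+r}\V_{M+r}^T\,\J_n\,\bar{\V}$, i.e. the orthogonal projection of $\J_n\bar{\V}$ onto $\mathrm{range}(\V_{M+r})$.

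With this in hand I would subtract blockwise from $\J_n\,\V_{M+r}=[\,\J_n\V_M \mid \J_n\bar{\V}\,]$. On the first $M$ columns the residual is $\J_n\V_M - \V_M\Hb_M$, which by the base relation \eqref{Radv:eqn:arnoldirelationalt} equals exactly $(\I - \V_M\V_M^T)\,\J_n\,v_M\,e_M^T$, reproducing the second term of \eqref{Radv:eqn:arnoldirelationext} once $e_M^T$ is padded to length $M+r$. On the last $r$ columns the residual is $\J_n\bar{\V} - \V_{M+r}\V_{M+r}^T\J_n\bar{\V} = (\I - \V_{M+r}\V_{M+r}^T)\,\J_n\,\bar{\V}$; writing $\bar{\V}=\sum_{i=1}^r \bar{v}_i\,e_{M+i}^T$, with $\bar v_i$ viewed as the $(M+i)$-th column of the full $(M+r)$-wide array, turns this into the third term of \eqref{Radv:eqn:arnoldirelationext}. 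Concatenating the two blocks then yields the claimed relation directly, so the bulk of the work is routine projector bookkeeping.

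The step I expect to be the genuine obstacle is justifying that the bottom-left block of $\Hb_{M+r}$ really is zero, since this is precisely what allows the first correction term to carry the cheap projector $\V_M\V_M^T$ rather than the full $\V_{M+r}\V_{M+r}^T$. Its entries are $\bar{v}_i^T\,\J_n\,v_k$ for $k\le M$: for $k<M$ they vanish automatically because $\J_n v_k\in\K_M$, which is orthogonal to every $\bar{v}_i$ by construction, so the only delicate case is $k=M$, where $\J_n v_M$ carries the out-of-space component $\h_{M+1,M}\,v_{M+1}$ and one must check $\bar{v}_i^T v_{M+1}=0$. Equivalently, the gap $(\V_{M+r}\V_{M+r}^T-\V_M\V_M^T)\,\J_n v_M = \sum_{i} \bar{v}_i\,(\bar{v}_i^T\J_n v_M)$ between the two candidate correction terms must vanish. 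I would therefore make explicit the hypothesis that the appended vectors are orthogonal to the Arnoldi residual direction $v_{M+1}$ (automatic when $\K_M$ is $\J_n$-invariant, i.e. $\h_{M+1,M}=0$); under it the block structure, and hence the stated extended relationship, follows, and the remaining algebra is exactly the blockwise assembly sketched above.
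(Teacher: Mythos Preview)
Your blockwise argument is exactly the paper's proof: write $\V_{M+r}=[\V_M\mid\bar{\V}]$, compute $\J_n\V_{M+r}$ and $\V_{M+r}\Hb_{M+r}$ block by block, subtract, invoke \eqref{Radv:eqn:arnoldirelationalt} on the first $M$ columns, recognize the orthogonal projector $(\I-\V_{M+r}\V_{M+r}^T)$ on the last $r$, and rewrite as a sum of rank-one terms.

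The obstacle you flag in your last paragraph is not actually present. In the paper the extended matrix $\Hb_{M+r}$ is \emph{defined} constructively with the block form $\bigl[\begin{smallmatrix}\Hb_M\\0\end{smallmatrix}\ \big|\ \overline{\Hb}\bigr]$, where only the last $r$ columns are obtained as $\overline{\Hb}=\V_{M+r}^T\J_n\bar{\V}$; the lower-left $r\times M$ block is simply set to zero, not derived from the full Galerkin relation. With that definition the first block of $\J_n\V_{M+r}-\V_{M+r}\Hb_{M+r}$ is literally $\J_n\V_M-\V_M\Hb_M$, so the original Arnoldi residual $(\I-\V_M\V_M^T)\J_n v_M e_M^T$ appears automatically with the ``cheap'' projector, and no hypothesis $\bar v_i\perp v_{M+1}$ is needed. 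You are right that the paper's preliminary sentence ``constructed such as to maintain $\Hb_{M+r}=\V_{M+r}^T\J_n\V_{M+r}$'' is not quite accurate in the $(M{+}i,M)$ entries for general $\bar v_i$, but that equality is never used in the proof of the lemma; only the explicit block structure is.
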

\begin{proof}
Recall that $\V_{M+r}$ and $\Hb_{M+r}$ have the following structure:
\begin{equation*}
\V_{M+r} = \begin{bmatrix} \V_M & \overline{\V} \end{bmatrix} \in \Re^{N \times (M+r)}, \quad \Hb_{M+r} = 
\begin{bmatrix} \begin{bmatrix} \Hb_M \\ 0 \end{bmatrix} & \overline{\Hb} \end{bmatrix} \in \Re^{(M+r) \times (M+r)},
\end{equation*}
where $\overline{\V} = \begin{bmatrix} \bar{v}_1, \dots, \bar{v}_r \end{bmatrix}$ and $\overline{\Hb} = \V_{M+r}^T \left(\J \overline{\V}\right)$. Computing $\J \V_{M+r}$ gives
\begin{equation*}
\J \V_{M+r} = \bigl[\begin{array}{c|c} \J \V_M & \J \overline{\V} \end{array}\bigr] = \bigl[\begin{array}{c|c} \V_M \Hb_M + \left(\I - \V_M\,\V_M^T\right)\J v_M e_{M;M}^T & \J \overline{\V} \end{array}\bigr],
\end{equation*}
which follows from the Arnoldi relationship (with $e_{M,M} \in \Re^{M}$). Similarly, computing $\V_{M+r} \Hb_{M+r}$ gives
\begin{equation*}
\V_{M+r} \Hb_{M+r} = \bigl[\begin{array}{c|c} \V_M & \overline{\V} \end{array}\bigr] \bigl[\begin{array}{c|c} \Hb_M & \multirow{2}{*}{$\overline{\Hb}$} \\ 0 & \end{array}\bigr] = \bigl[\begin{array}{c|c} \V_M \Hb_M & \V_{M+r} \overline{\Hb} \end{array}\bigr].
\end{equation*}
Subtracting yields:
\begin{equation*}
\J \V_{M+r} - \V_{M+r} \Hb_{M+r} = \bigl[\begin{array}{c|c} \left(\I - \V_M\,\V_M^T\right)\J v_M e_{M;M}^T & \J \overline{\V} - \V_{M+r} \overline{\Hb} \end{array}\bigr].
\end{equation*}
Substituting in the definition of $\overline{\Hb}$ and rearranging leads to:
\begin{equation*}
\J \V_{M+r} = \V_{M+r} \Hb_{M+r} + \bigl[\begin{array}{c|c} \left(\I - \V_M\,\V_M^T\right)\J v_M e_{M;M}^T & \left(\I - \V_{M+r} \V_{M+r}^T \right)\J \overline{\V} \end{array}\bigr].
\end{equation*}
Finally, we rewrite the last matrix as a sum of rank-1 matrices using canonical basis vectors $e_{M+r;j} \in \Re^{(M+r)}$:
\begin{eqnarray*}
&&\bigl[\begin{array}{c|c} \left(\I - \V_M\,\V_M^T\right)\J v_M e_{M;M}^T & \left(\I - \V_{M+r} \V_{M+r}^T \right)\J \overline{\V} \end{array}\bigr] \\
&& \qquad = \left(\I - \V_M\,\V_M^T\right)\J v_M e_{M+r;M}^T + \sum_{i = 1}^r \left(\I - \V_{M+r}\V_{M+r}^T\right)\J\bar{v}_i e_{M+r;M+i}^T.
\end{eqnarray*}
With this substitution and after some rearranging we obtain equation \eqref{Radv:eqn:arnoldirelationext}. 
%
%
\qed
\end{proof}

With the ability to extend the basis $\V$ with arbitrary vectors, the question now is: what vectors shall we add? Intuitively, we seek to add those vectors that provide improved stability, or related, improved benefits in controlling the residuals of later stages of the Rosenbrock-Krylov method.

\begin{remark}[Extending the Krylov space]
When the Krylov space $\V_{M+i}$ changes for each stage, so does the Jacobian projection 
\[
\A_i = \V_{M+i}\V_{M+i}^T\,\J_n\,\V_{M+i}\V_{M+i}^T.
\]
In this case the additional stability term defined by equation \eqref{eqn:stage-resid} changes to:
\begin{equation}
\label{eqn:stage-resid-mod}
\begin{split}
S\left(h\,\J, h\A_1,\dots,\h\A_s\right)\,y_n
&= -\left(b^T \otimes \I \right) 
\bigl[\I - \left(\betaB \otimes h\,\J\right)\bigr]^{-1}\, \bigl[\gammaB \otimes h\J-h\widetilde{\A}_{\gammaB} \bigr]
\,
\bigl[\I - h\,\widetilde{\A}_{\gammaB} \bigr]^{-1}\,\mathbf{F},
\end{split}
\end{equation}
where the block matrix $\gammaB \otimes \A$ in \eqref{eqn:stage-resid} has been replaced by:
\[
\widetilde{\A}_{\gammaB} = \begin{bmatrix} \gamma_{i,j}\, \A_j \end{bmatrix}_{1 \le i,j \le s}.
\]
Since the coefficient matrices $\betaB$ and $\gammaB$ are lower triangular, the matrices $\betaB \otimes h\,\J$,
$\gammaB \otimes h\J$, as well as $h\widetilde{\A}_{\gammaB}$ are all block lower triangular. The $i$-th stage component vector in \eqref{eqn:stage-resid-mod} has the form:
\[
\begin{split}
&\left( 
\bigl[\I - \left(\betaB \otimes h\,\J\right)\bigr]^{-1}\,
\bigl[h\,\gammaB \otimes \J-h\,\widetilde{\A}_{\gammaB} \bigr]\,
\bigl[\I - h\,\widetilde{\A}_{\gammaB} \bigr]^{-1}\,\mathbf{F} \right)_i \\
& \qquad = 
\bigl[\I - h\,\gamma\,\J \bigr]^{-1}\,
\bigl[h\,\gamma\, \J-h\,\gamma \A_i \bigr]\,
\bigl[\I - h\,\gamma\,\A_i \bigr]^{-1}\,F_i + [\dots] \\
& \qquad = 
\bigl[\I - h\,\gamma\,\J \bigr]^{-1}\,\,F_i
-
\bigl[\I - h\,\gamma\,\A_i \bigr]^{-1}\,F_i + [\dots],
\end{split}
\]
where the first part corresponds to the action of the diagonal block, and the remaining part $[\dots]$ contains linear combinations of off-diagonal blocks times the previous stage function values $F_1, \dots, F_{i-1}$.  The first part is the error in the solution of the stage linear system $\bigl[\I - h\,\gamma\,\J \bigr]\,X_i = F_i$  when solved in the subspace space spanned by $\V_{M+i}$ (with $\J$ replaced by $\A_i$). A strategy to reduce $S\left(h\,\J, h\A_1,\dots,\h\A_s\right)\,y_n$ is to extend the Krylov space at stage $i$ such as to reduce the linear system solution error. In the next section we discuss extending the subspace with the right hand side of the system $F_i$. Note that the off-diagonal components $[\dots]$ components  may remain large even after extending the space, as this reduces the first part of the vector $S\left(h\,\J, h\A_1,\dots,\h\A_s\right)\,y_n$.
\end{remark}


\subsection{Exploiting the form of the residual}
\label{sec:ext-space}

We see from equation \eqref{Radv:eqn:ithstageresid} in Theorem \ref{Radv:thm:istageresidual} that residuals for stages beyond the first contain terms for which it is difficult to guarantee boundedness, even when the first stage residuals are controlled.  We attempt to overcome this fact, by extending the basis $\V_M$ at the $i$'th stage to include the intermediate RHS values $F_j$ for $j = 2, \dots, i$ so that  
%
\begin{equation*}
\left(F_i - \V_{M;i}\psi_i\right) = 0, \quad \textrm{for } i = 1, \dots, s.
\end{equation*}

The matrices $\V_{M;i}$ and $\Hb_{M;i}$ can be constructed iteratively, extending the basis $\V_{M;i-1}$ with $F_i$ at each stage to form $\V_{M;i}$ and $\Hb_{M;i}$ as outlined in the previous section.  One difficulty of extending the basis in this way, is that the $\Lb\Ub$-decomposition of the left-hand-side of \eqref{Radv:eqn:genformWstage}, $\left(\I - h\gamma\Hb\right)$ cannot be reused.  This problem can be overcome through the use of an update to the $\Lb$ and $\Ub$ matrices.  The $\Lb\Ub$-decomposition is constructed to solve
\begin{equation*}
\begin{array}{rcl}
\left(\I - h\gamma\Hb\right) x & = & b \\
\Pb\left(\I - h\gamma\Hb\right) x & = & \Pb b \\
\Lb \Ub x & = & \Pb b \\
\end{array}
\end{equation*}
so that
\begin{equation*}
\begin{array}{rcl}
\Pb\left(\I - h\gamma\Hb\right) & = & \Lb\Ub \\
\Ub & = & \Lb^{-1}\Pb\left(\I - h\gamma\Hb\right)
\end{array}
\end{equation*}
and so exploiting the upper-Hessenberg structure of $\Hb$, the $\Lb\Ub$-decomposition can be updated as
\begin{equation*}
\begin{bmatrix}\mathbf{u}_{1,M+1} \\ \vdots \\ \mathbf{u}_{M,M+1} \end{bmatrix} = -h\gamma\Lb_M^{-1}\Pb_M  \begin{bmatrix}\h_{1,M+1} \\ \vdots \\ \h_{M,M+1} \end{bmatrix}, \quad \mathbf{u}_{M+1,M+1} = \left(1 - h\gamma\h_{M+1,M+1}\right), \quad \mathbf{l}_{M+1,M+1} = 1
\end{equation*}

\begin{theorem}[Residual of the $i$th stage with extended basis]
\label{Radv:thm:sstageresidualext}
When the time integration scheme \eqref{Radv:eqn:genformW} is implemented with the basis extended (via the process described in Section \ref{Radv:sec:outsidebasis}) at each stage to include the current right-hand-side vector, $F_i$, then the residual of the $i$th stage when using an $m$-dimensional Krylov space is given by:
\begin{equation}
\label{Radv:eqn:sresidext}
r_{M;i} = - h\,\left(\I - \V_M\,\V_M^T\right)\J v_M e_{M+i-1,M}^T \sum_{j=1}^i \gamma_{i,j} \widehat{\lambda}_j - h \left(\I - \V_{M;i}\V_{M;i}^T\right)\J \sum_{k=1}^{i-1} v_{M+k} e_{M+i-1,M+k}^T \sum_{j=1}^i \gamma_{i,j} \widehat{\lambda}_j,
\end{equation}
where $e_j \in \Re^{(M+i-1)}$ are canonical basis vectors, $\widehat{\lambda}_j = \begin{bmatrix} \lambda_j^T, 0, \dots, 0 \end{bmatrix}^T \in \Re^{(M+i-1)}$, and $\V_{M;1} = \V_M$.
\end{theorem}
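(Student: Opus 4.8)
The plan is to mirror the argument used for Theorem~\ref{Radv:thm:istageresidual}, replacing the fixed basis $\V_M$ by the stage-dependent extended basis $\V_{M;i}$ and invoking the extended Arnoldi relationship of Lemma~\ref{Radv:lem:arnoldirelationext} in place of the ordinary one \eqref{Radv:eqn:arnoldirelation}. First I would isolate the residual from the full-space stage equation, writing $r_{M;i} = k_i - h F_i - h\,\J\sum_{j=1}^i \gamma_{i,j} k_j$, exactly as in the opening of the proof of Theorem~\ref{Radv:thm:istageresidual} but with every quantity now referred to the current extended space.

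The decisive structural observation is that, by construction, each $F_j$ for $j=1,\dots,i$ lies in the span of $\V_{M;j}$, so the projection defect vanishes, $F_j - \V_{M;j}\psi_j = 0$. Consequently the reconstruction step \eqref{Radv:eqn:rokresidk} collapses to $k_j = \V_{M;j}\lambda_j$, and because the bases are nested ($\V_{M;j}$ is the leading block of $\V_{M;i}$ whenever $j \le i$) this is exactly $k_j = \V_{M;i}\,\widehat{\lambda}_j$ with the zero-padded vectors $\widehat{\lambda}_j \in \Re^{(M+i-1)}$. This is precisely the mechanism that eliminates the $h^2\gamma_{i,j}\J(F_j - \V_M\psi_j)$ terms appearing in Theorem~\ref{Radv:thm:istageresidual}: extending the basis with the stage right-hand sides is designed to annihilate them. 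Substituting $k_i = \V_{M;i}\lambda_i$ and $k_j = \V_{M;i}\widehat{\lambda}_j$ yields the compact expression $r_{M;i} = \V_{M;i}\lambda_i - hF_i - h\,\J\,\V_{M;i}\sum_{j=1}^i \gamma_{i,j}\widehat{\lambda}_j$.

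Next I would apply Lemma~\ref{Radv:lem:arnoldirelationext} with $r = i-1$ to rewrite $\J\,\V_{M;i}$ as $\V_{M;i}\Hb_{M;i}$ plus the two correction terms involving $(\I - \V_M\V_M^T)\J v_M\, e_{M+i-1,M}^T$ and $(\I - \V_{M;i}\V_{M;i}^T)\J v_{M+k}\, e_{M+i-1,M+k}^T$. Feeding this in and using $F_i = \V_{M;i}\psi_i$ (again because $F_i$ belongs to the span of $\V_{M;i}$), the three contributions lying in the range of $\V_{M;i}$ assemble into $\V_{M;i}\bigl(\lambda_i - h\psi_i - h\Hb_{M;i}\sum_{j=1}^i \gamma_{i,j}\widehat{\lambda}_j\bigr)$, whose parenthesized factor is identically zero by the reduced stage recurrence for $\lambda_i$ in the extended space. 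What survives are exactly the two correction terms, which is the claimed formula~\eqref{Radv:eqn:sresidext}.

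The main obstacle I anticipate is not the algebra, which is a routine generalization of the earlier proof, but the careful bookkeeping across the changing subspaces. One must track that a reduced solution $\lambda_j$ computed in $\Re^{(M+j-1)}$ is lifted consistently to $\Re^{(M+i-1)}$ through $\widehat{\lambda}_j$, that the nesting of the bases gives $\V_{M;j}\lambda_j = \V_{M;i}\widehat{\lambda}_j$ and hence $\sum_{j=1}^i \gamma_{i,j} k_j = \V_{M;i}\sum_{j=1}^i \gamma_{i,j}\widehat{\lambda}_j$, and that the reduced stage equation valid at stage $i$ is the one written in terms of these padded vectors and of $\Hb_{M;i}$. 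Making the nesting explicit and indexing the canonical vectors $e_{M+i-1,M}$ and $e_{M+i-1,M+k}$ correctly is where the proof must be written with care.
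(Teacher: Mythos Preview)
Your proposal is correct and follows essentially the same approach as the paper: substitute $k_j = \V_{M;i}\widehat{\lambda}_j$ into the full-space residual, invoke Lemma~\ref{Radv:lem:arnoldirelationext} to expand $\J\,\V_{M;i}$, and cancel the in-range terms against the reduced stage recurrence. The paper's proof is exactly this computation, and your remarks about the nesting of the bases and the zero-padding bookkeeping are precisely the points that make the substitution $\sum_j \gamma_{i,j} k_j = \V_{M;i}\sum_j \gamma_{i,j}\widehat{\lambda}_j$ legitimate.
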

\begin{proof}
The general stage equations for the method are:
\begin{equation*}
\begin{array}{rcl}
k_i & = & h F_i + h \J \displaystyle\sum_{j=1}^{i} \gamma_{i,j} k_j + r_{M;i} \\
\widehat{\lambda}_i & = & h \psi_i + h \Hb_{M;i} \displaystyle\sum_{j=1}^{i-1} \gamma_{i,j} \widehat{\lambda}_j \\
\psi_i & = & \V_{M;i}^T F_i \\
k_i & = & \V_{M;i} \widehat{\lambda}_i + h\,\left(F_i - \V_{M;i}\psi_i\right) = \V_{M;i}\widehat{\lambda}_i
\end{array}
\end{equation*}
Additionally, we define $\widehat{\lambda}_j \in \Re^{(M+i-1)}$  as follows:
\begin{equation*}
\widehat{\lambda}_j = \left\{\begin{array}{cl}
\begin{bmatrix} \lambda_j^T, 0, \dots, 0 \end{bmatrix}^T, & j < i, \\
\lambda_j, & j = i,
\end{array}\right.
\end{equation*}
where reduced space solutions $\lambda_j \in \Re^{(M+j-1)}$ from previous stages are extended with zeros to match the dimension of the current stage system.

Starting from the full space equation, we make substitutions for $F_i$ and the $k$'s:
\begin{equation*}
r_{M;i} = \V_{M;i}\widehat{\lambda_i} - h\,\V_{M;i}\psi_i - h \J\V_{M;i} \displaystyle \sum_{j=1}^{i} \gamma_{i,j}  \widehat{\lambda}_j,
\end{equation*}
Making use of lemma \ref{Radv:lem:arnoldirelationext}, we make substitutions for appearances of $\J \V_{M;i}$:
\begin{multline*}
r_{M;i} = \V_{M;i}\widehat{\lambda}_i - h\,\V_{M;i}\psi_i - h\,\V_{M;i}\Hb_{M;i}\displaystyle\sum_{j=1}^i \gamma_{i,j}\widehat{\lambda}_j - h\left(\I - \V_M\V_M^T\right)\J v_Me_{M+i-1;M}^T\displaystyle\sum_{j=1}^i \gamma_{i,j}\widehat{\lambda}_j \\ 
- h\left(\I - \V_{M;i}\V_{M;i}^T\right)\J\displaystyle\sum_{k=1}^{i-1}v_{M+k}e_{M+i-1;M+k}^T \displaystyle\sum_{j=1}^i\gamma_{i,j}\widehat{\lambda}_j
\end{multline*}
Collecting terms and rearranging allows us to find and cancel the terms from the reduced space equation:
\begin{multline*}
r_{M;i} = \V_{M;i}\underbrace{\bigl[\widehat{\lambda}_i - h\psi_i - h\Hb_{M;i}\displaystyle\sum_{j=1}^{i}\gamma_{i,j}\widehat{\lambda_j}\bigr]}_{=0} - h\left(\I - \V_M\V_M^T\right)\J v_Me_{M+i-1;M}^T\displaystyle\sum_{j=1}^i \gamma_{i,j}\widehat{\lambda}_j \\ 
- h\left(\I - \V_{M;i}\V_{M;i}^T\right)\J\displaystyle\sum_{k=1}^{i-1}v_{M+k}e_{M+i-1;M+k}^T \displaystyle\sum_{j=1}^i\gamma_{i,j}\widehat{\lambda}_j
\end{multline*}
\begin{equation*}
r_{M;i} = - h \left(\I - \V_M\,\V_M^T\right)\J v_M e_{M+i-1;M}^T \sum_{j=1}^i \gamma_{i,j} \widehat{\lambda}_j - h \left(\I - \V_{M;i} \V_{M;i}^T\right)\J \sum_{k=1}^{i-1} v_{M+k} e_{M+i-1;M+k}^T \sum_{j=1}^i \gamma_{i,j} \widehat{\lambda}_j.
\end{equation*}
\qed
\end{proof}

The new residual coming from the extended basis \eqref{Radv:eqn:sresidext} appears to be a substantial improvement over the residuals from the vanilla method.  Since here the residuals consist exclusively of terms projected out of the span of $\V_M$ and $\V_{M;s}$.

\section{Numerical Results}
\label{Radv:sec:numerical}

In this section, we evaluate the behavior of the two presented modifications to the Rosenbrock-K method on a selection of test problems. As both the adaptive basis size modification and the basis extension variant are intended to increase the stability of the method, testing primarily focused on problems that demonstrate stiff behavior: CBM-IV as a stiff system of densely-coupled ODEs, and the Allen-Cahn equation as a PDE with variable stiffness based on coefficient selection. Also, the shallow water equations were tested in order to demonstrate behavior in a non-stiff case. All tests were performed in Matlab on a single workstation, using an implementation of the ROK method with an adaptive timestep-size error controller, and results were compared against reference solutions computed by Matlab's \texttt{ode15s} integrator. Figures are produced by sweeping over the same set of relative error tolerances $\{10^{-i}\}_{i=2...10}$.

All the figures in this section use the same labeling scheme, as follows. Data sets labeled $M$ use the stated fixed number of basis vectors, where those labeled $R$ use an adaptive number based on the stated residual tolerance. Dashed lines indicate basis extension and can apply to either fixed or adaptive basis size selection, indicated in the label by the suffix $ext$. Also, lines labeled $R = \text{tol}$ are experiments where the Arnoldi residual tolerance is set to the same value as the tolerance for the adaptive timestep-size error controller.

\subsection{Non-stiff case: shallow water equations}
First, we examine relative performance of the methods on the shallow water equations. The shallow water equations in spherical coordinates are
\begin{subequations}
\begin{eqnarray}
\label{Radv:eqn:shallowwater}
\frac{\partial u}{\partial t} + \frac{1}{a \cos\theta} \left(u\frac{\partial u}{\partial \lambda} + v \cos\theta\frac{\partial u}{\partial \theta}\right) - \left(f + \frac{u\tan\theta}{a}\right)a + \frac{g}{a\cos\theta}\frac{\partial h}{\partial \lambda} & = & 0 \\
\frac{\partial v}{\partial t} + \frac{1}{a\cos\theta}\left(u\frac{\partial v}{\partial \lambda} + v\cos\theta\frac{\partial v}{\partial \theta}\right) + \left(f + \frac{u\tan\theta}{a}\right)u + \frac{g}{a}\frac{\partial h}{\partial\theta} & = & 0 \\
\frac{\partial h}{\partial t} + \frac{1}{a\cos\theta} \left(\frac{\partial (hu)}{\partial \lambda} + \frac{\partial (hv\cos\theta)}{\partial\theta}\right) & = & 0.
\end{eqnarray}
\end{subequations}
Where $f = 2\Omega\sin\theta$, $h$ is the height of the atmosphere, $u$ is the zonal wind component, $v$ is the meridional wind component, $\theta$ and $\lambda$ are the latitudinal and longitudinal directions, $a$ is the radius of the earth, $\Omega$ is the rotational velocity of the earth, and $g$ is the gravitational constant. The space discretization is performed using the unstaggered Turkel-Zwas scheme \cite{Navon_1987_swe, Navon_1991_swe}, with 72 nodes in the longitudinal direction and 36 nodes in the latitudinal direction. 

\begin{figure}[htp]
\centering
\includegraphics[width=4in]{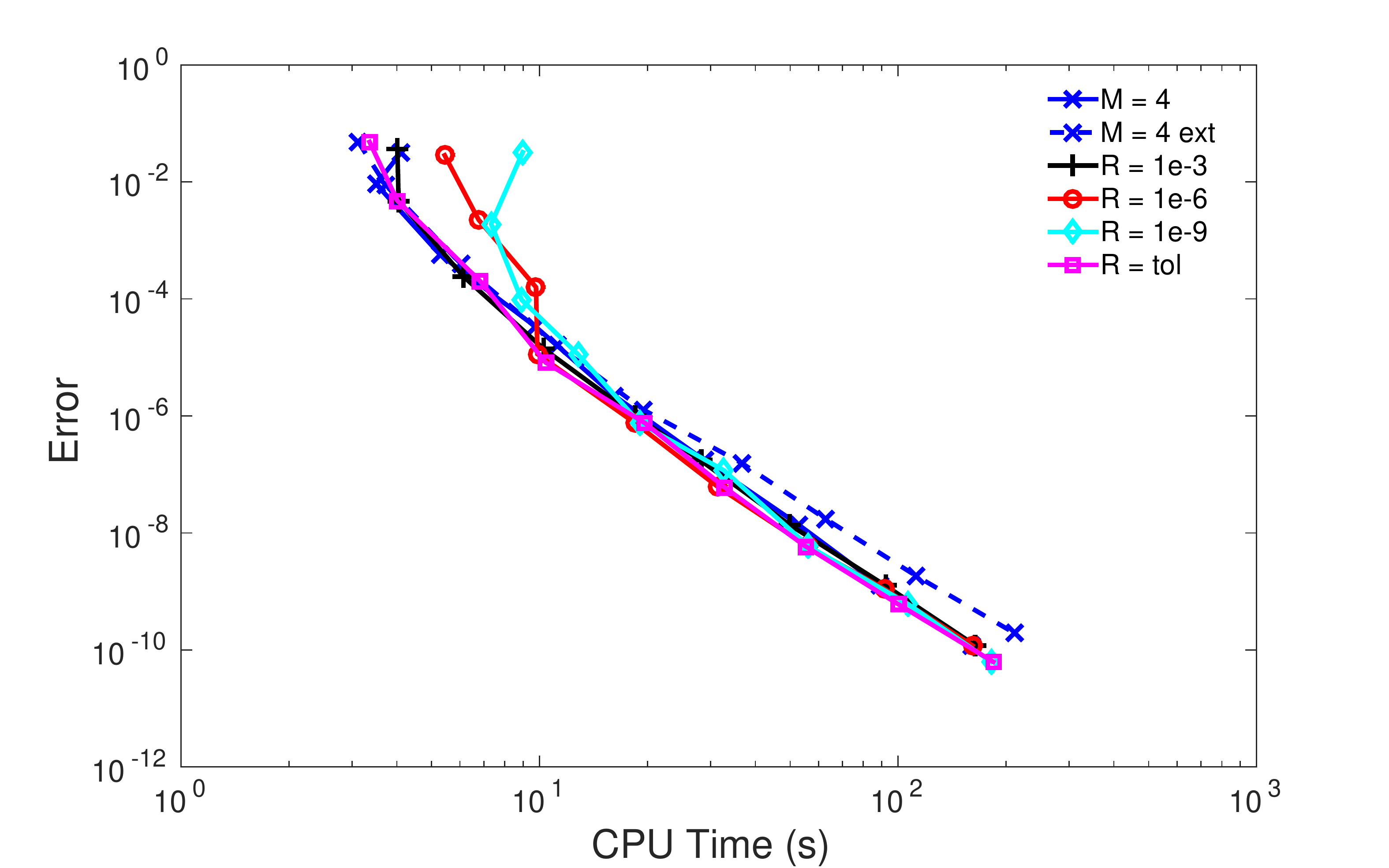}
\caption{Work-precision diagram for a Rosenbrock-Krylov method applied to the shallow water equations \eqref{Radv:eqn:shallowwater}.}
\label{Radv:fig:sweprecision}
\end{figure}

Figure \ref{Radv:fig:sweprecision} shows the relative performance of a fourth order ROK method with a variety of basis selection criteria, applied to the shallow water equations. For non-stiff problems, the primary limiter for stepsize is the error controller, rather than method stability, and making each timestep as cheap as possible will result in the fastest method of a given order. This can be seen in the figure, with the fixed basis $m = 4$ criteria proving the fastest in the shallow water equations. The notable result, however, is that the adaptive basis size methods also select appropriately small basis sizes to be competitive, except when we examine the combination of tight residual tolerance with loose error tolerance. This behavior can be explained if we recall from Corollary \ref{Radv:thm:istageresidual} in Section \ref{Radv:sec:controlresid} above, that the first stage residual scales with $h$, the timestep size, so enforcing a residual $r_{M;1} \leq 10^{-9}$ with a relatively large stepsize $h$ requires more basis vectors. This is the primary motivation behind selecting the residual tolerance to be the same as the method error tolerance in the $R = \text{tol}$ experiment. Finally, note that extension of the basis for non-stiff problems shows no benefit.

\subsection{Stiff case: CBM-IV}
Next we give some results for {\sc ROK} methods applied to a stiff system of ODEs coming from a KPP MATLAB implementation of the CBM-IV model \cite{Gery89photochemicalkinetics}.  This problem is based on the Carbon Bond Mechanism IV (CBM-IV), consisting of 32 chemical species involved in 70 thermal and 11 photolytic reactions \cite{Sandu96benchmarkingstiff}.
%
\begin{figure}[htp]
\centering
\includegraphics[width=4in]{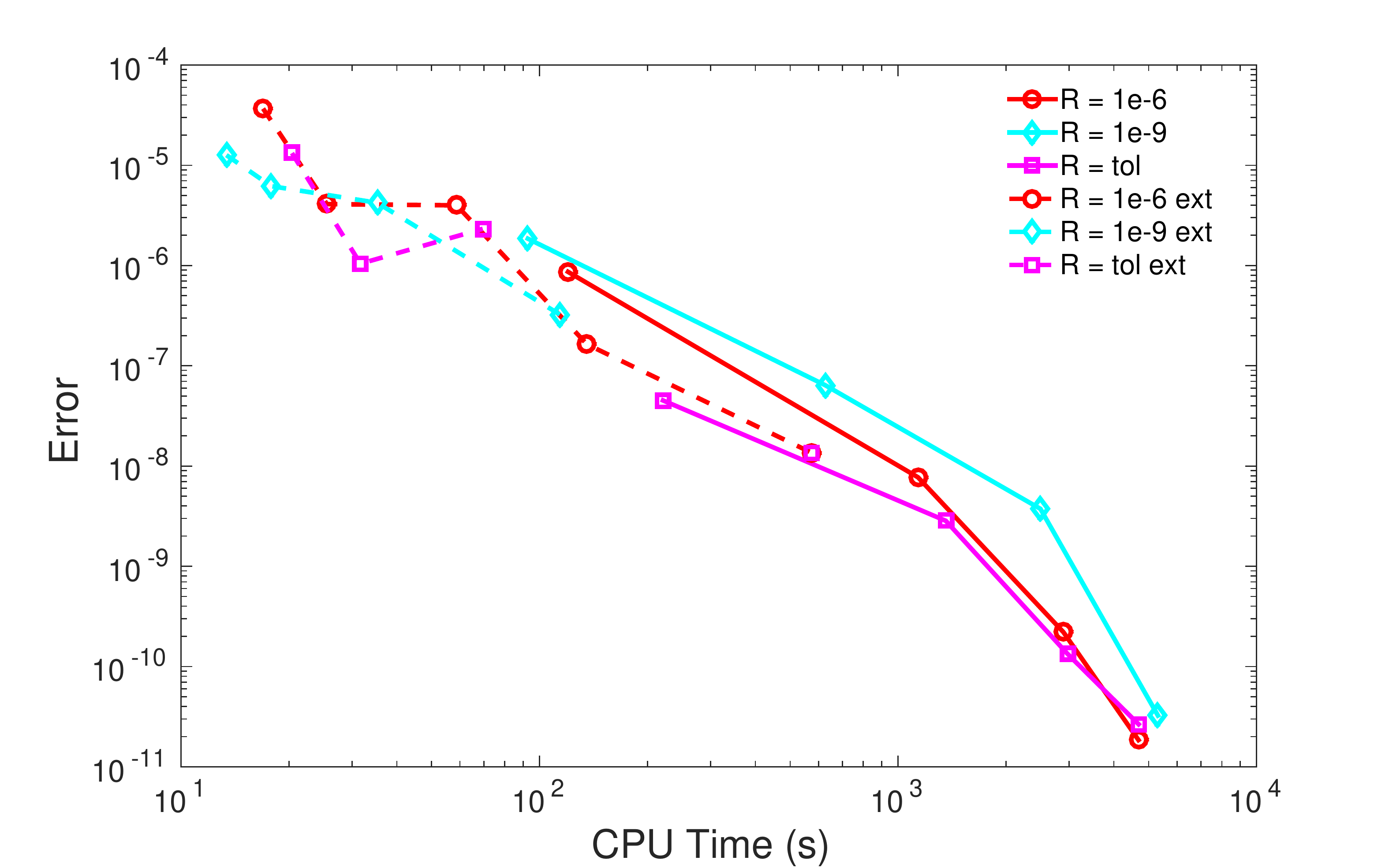}\\
\caption{Precision Diagram for CBM-IV test problem, using adaptive basis size with and without basis extension.}
\label{Radv:fig:cbmiv}
\end{figure}

Figure \ref{Radv:fig:cbmiv} shows the relative performance of a fourth order ROK method with varying tolerances for the first stage residual applied to the CBM-IV test model.  We present here only methods using adaptive basis sizes, since methods using a fixed strategy do not converge for most error tolerance settings, and though the same 9 error tolerance settings were run for each method, only the settings that converged are presented.  Due to the stiffness of the problem, timestep size is clearly limited by the stability of the method, as solutions with accuracy lower than about $10^{-5}$ cannot be obtained. So, this very stiff case demonstrates two important results. First, residual based adaptive basis size methods can solve problems for which fixed basis methods fail to converge entirely. Second, basis extension significantly improves the stability of the method, allowing looser and much faster error tolerance settings to converge.

\subsection{Varying stiffness: the Allen-Cahn problem}

For further performance comparison we consider the two-dimensional Allen-Cahn system, a parabolic partial differential equation 
\begin{eqnarray}
\label{Radv:eqn:allencahn}
&& \frac{\partial}{\partial t} u = \alpha\nabla^2 u + \gamma\left(u - u^3\right), \quad (x,y) \in [0,1] \times [0,1], \quad  t \in [0, 0.2], 
\end{eqnarray}
with $\alpha$ set to $0.1$ or $1.0$ and $\gamma = 1.0$. The problem has homogeneous Neumann boundary conditions and the initial solution 
$u(t=0) = 0.4 + 0.1(x + y) + 0.1\sin(10x)\sin(20y)$.

\begin{figure}[H]
\centering
\subfigure[$N = 64 \times 64$, $\alpha = 0.1$ ]{
\includegraphics[width=4in]{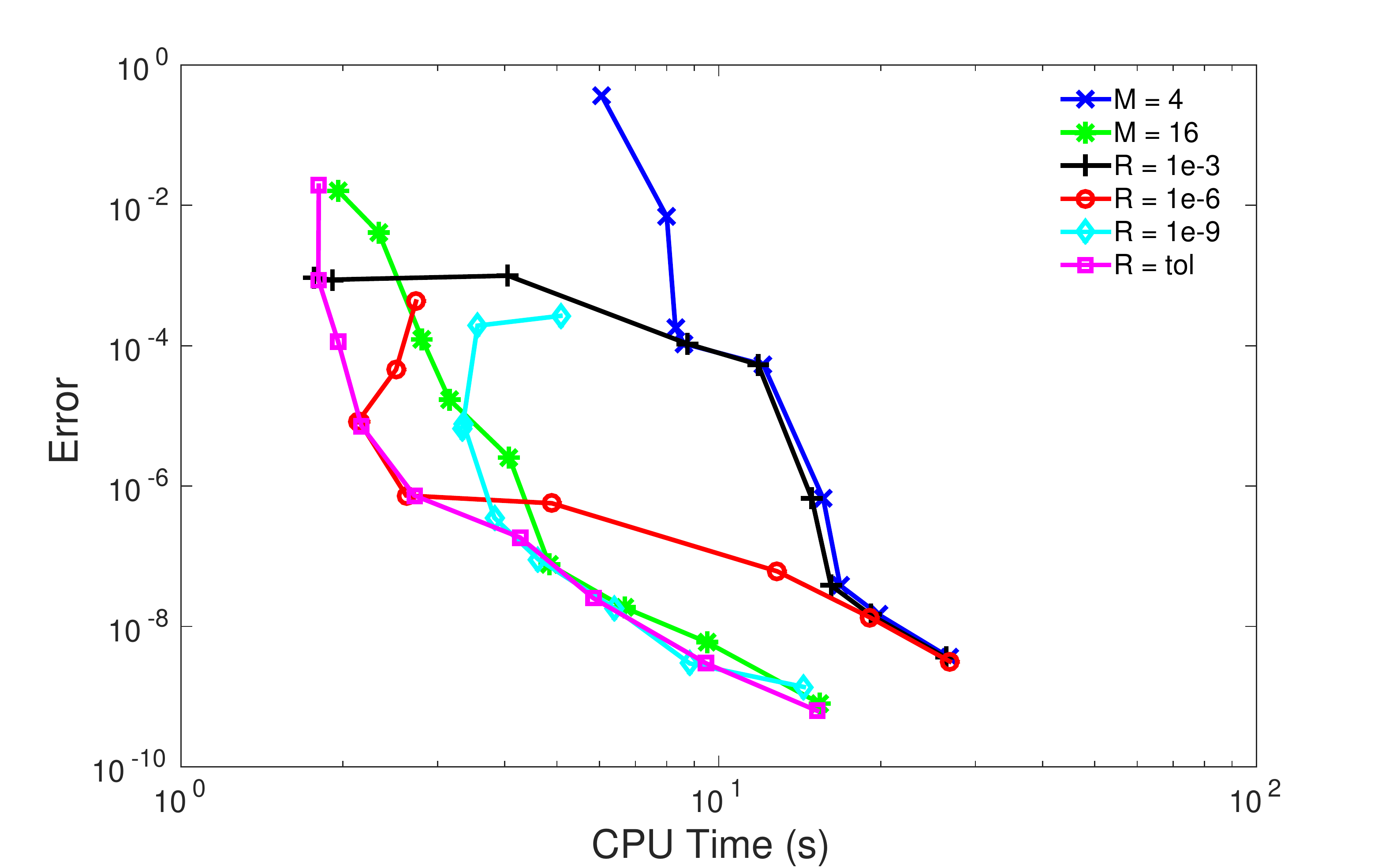}
\label{Radv:fig:acprecisionsmallns}
} \\
\subfigure[$N = 64 \times 64$, $\alpha = 1.0$ ]{
\includegraphics[width=4in]{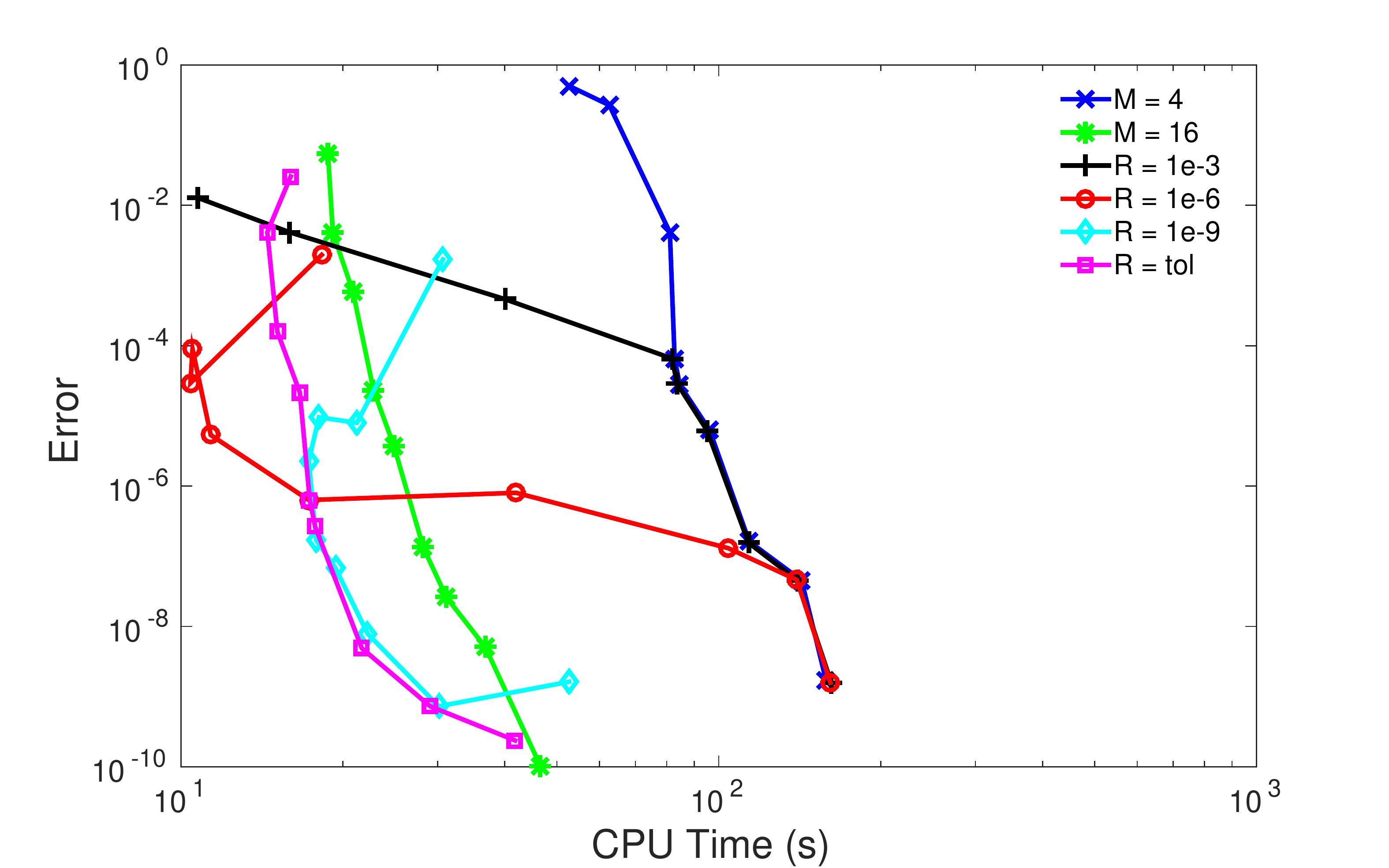}
\label{Radv:fig:acprecisionsmalls}
} \\
\subfigure[$N = 256 \times 256$, $\alpha = 1.0$ ]{
\includegraphics[width=4in]{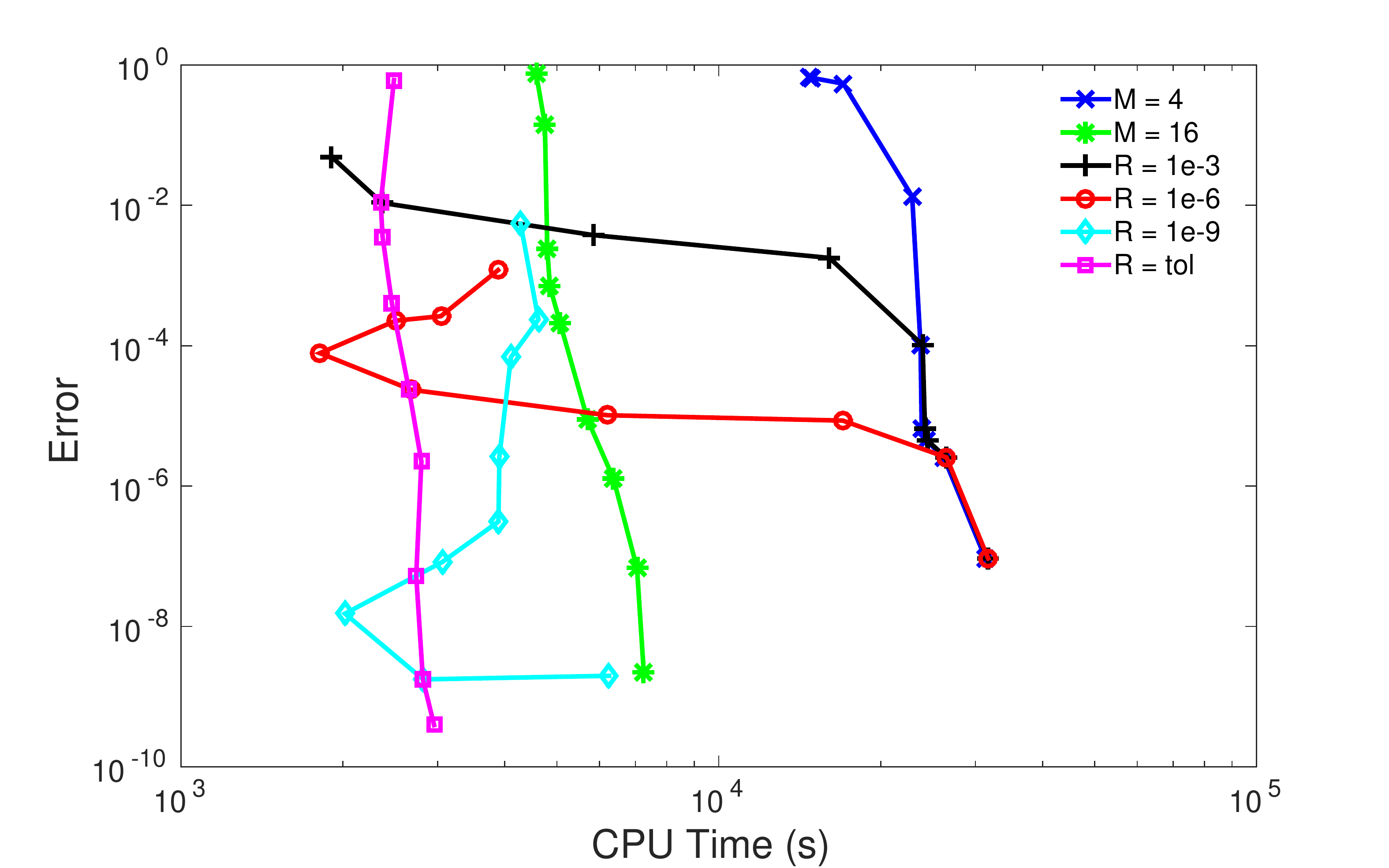}
\label{Radv:fig:acprecisionbigs}
}
\caption{Work-precision diagram for a Rosenbrock-Krylov method applied to the Allen-Cahn test problem \eqref{Radv:eqn:allencahn}.}
\label{Radv:fig:acprecision}
\end{figure}

\begin{figure}[H]
\centering
\subfigure[$N = 64 \times 64$, $\alpha = 0.1$ ]{
\includegraphics[width=4in]{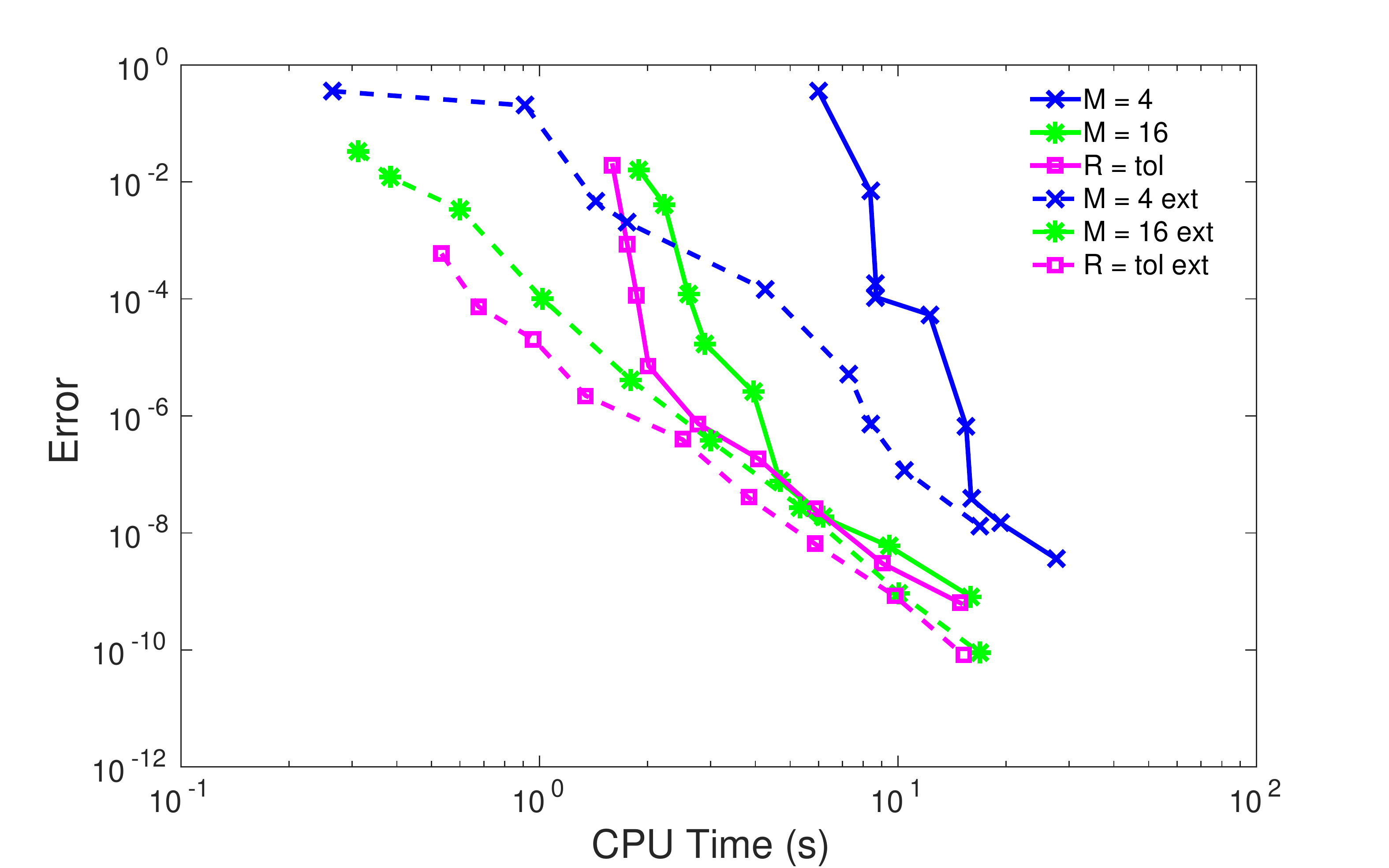}
\label{Radv:fig:acprecisionextsmallns}
} \\
\subfigure[$N = 64 \times 64$, $\alpha = 1.0$ ]{
\includegraphics[width=4in]{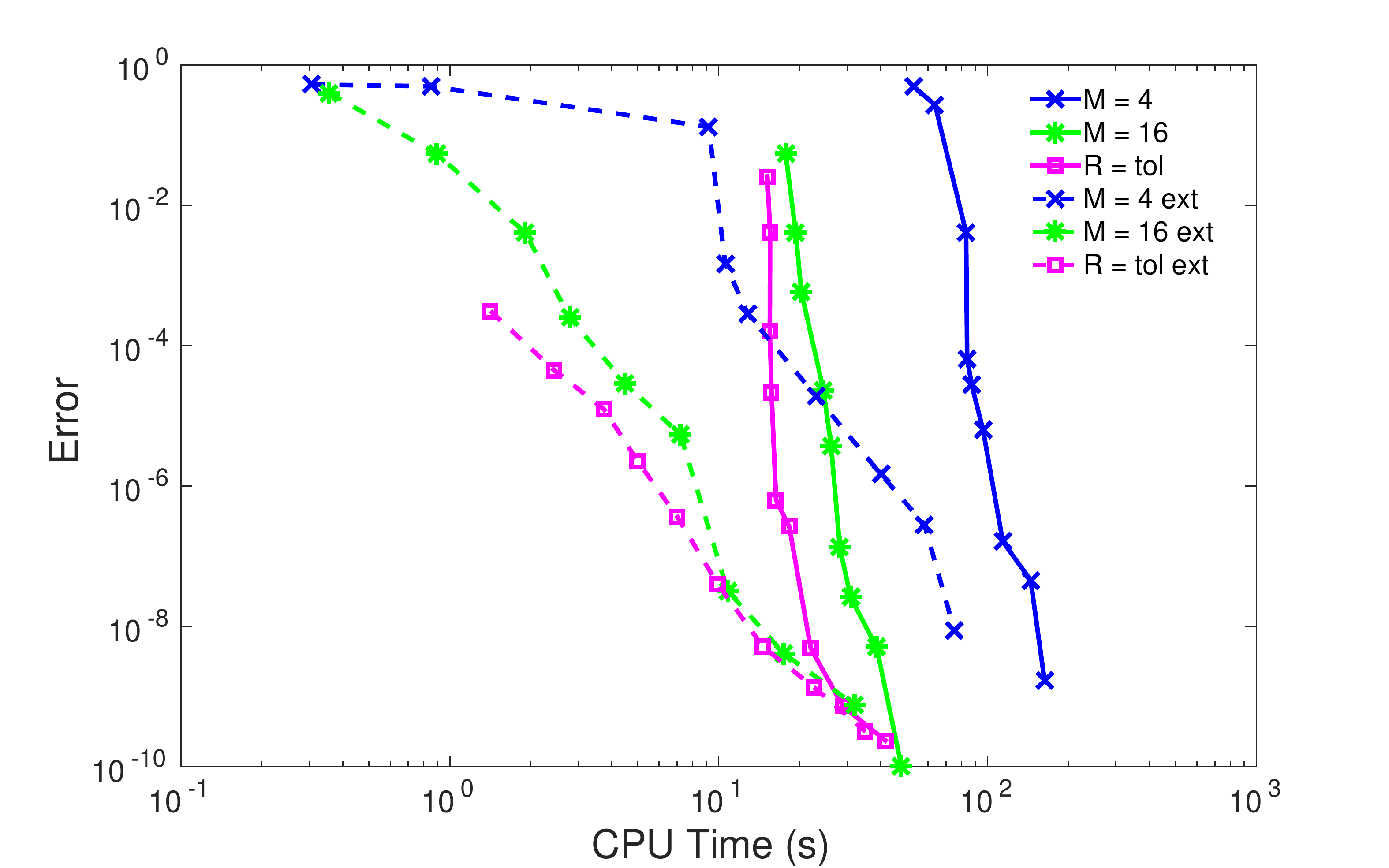}
\label{Radv:fig:acprecisionextsmalls}
} \\
\subfigure[$N = 256 \times 256$, $\alpha = 1.0$ ]{
\includegraphics[width=4in]{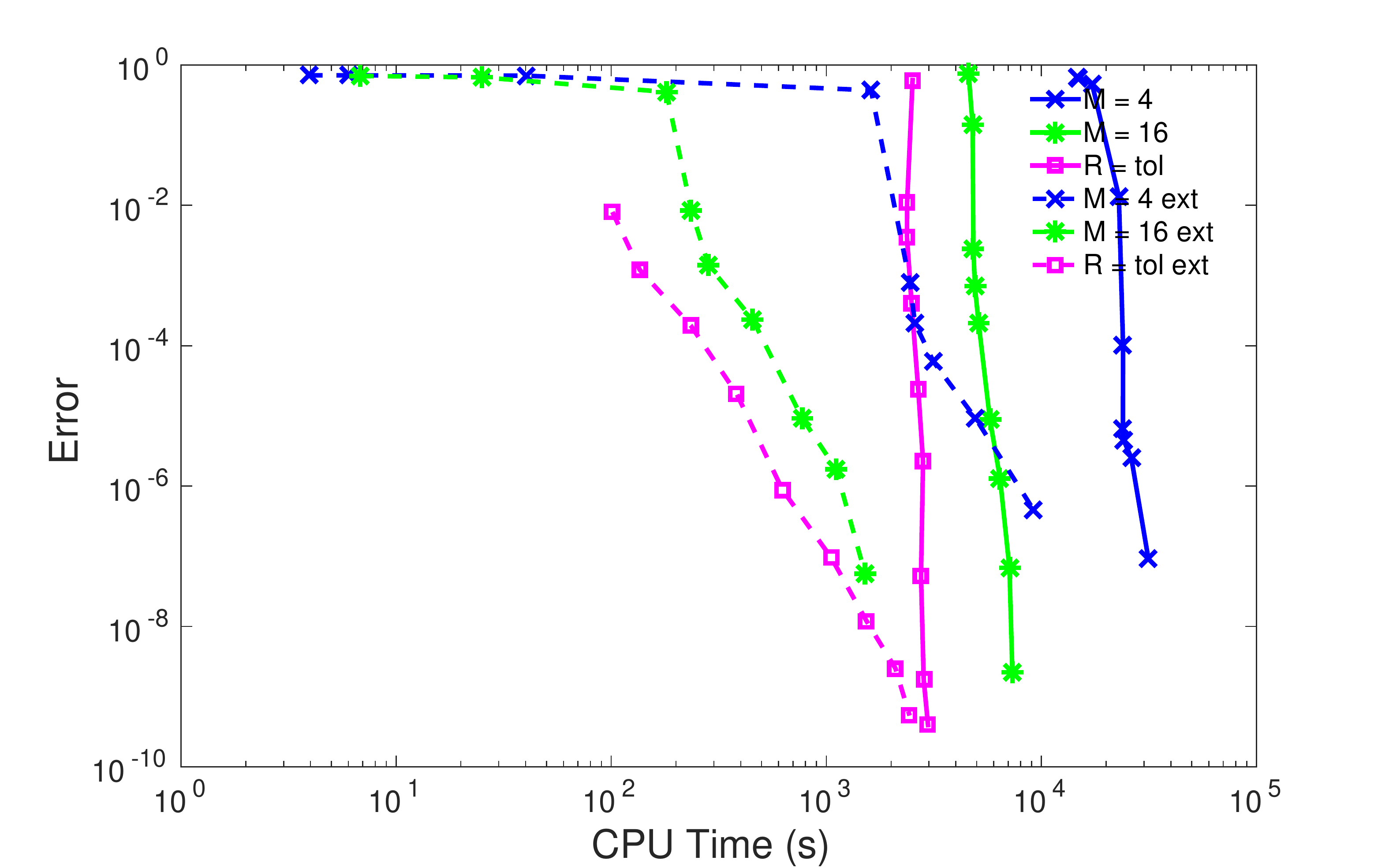}
\label{Radv:fig:acprecisionextbigs}
}
\caption{Work-precision diagram for a Rosenbrock-Krylov method applied to the Allen-Cahn test problem \eqref{Radv:eqn:allencahn}.}
\label{Radv:fig:acprecisionext}
\end{figure}

Figure \ref{Radv:fig:acprecision} compares the relative performance of a fourth order ROK method with varying tolerances for the residual of the first stage applied to the Allen-Cahn problem, alongside some fixed basis size methods for comparison.  Here we have varied the problem from relatively non-stiff in figure \ref{Radv:fig:acprecisionsmallns} to stiff in \ref{Radv:fig:acprecisionbigs} by varying both the spatial resolution and size of the diffusion parameter $\alpha$.  The vertical lines in the figures are evidence of the problem stiffness, resulting from an interaction between the error controller and method stability limiting the size of stable timesteps. Further evidence of stiffness can be seen by examining the fixed basis size results, with $M = 16$ performing 2-3 times better than $M = 4$.  So, in a stability bound problem, we observe the benefit of the residual based adaptive basis size method, with all adaptive methods matching or exceeding the performance of the $M = 4$ baseline as well as the $M = 16$ setting for a number of error tolerances.  Looking closer, we observe the intuition which resulted in the $R = \text{tol}$ heuristic of matching the residual tolerance to the error tolerance: each of the fixed residual tolerance settings perform best when the mismatch between the residual tolerance and error tolerance is small. For example, from Figure \ref{Radv:fig:acprecisionbigs}, the $r_{M;1} \leq 10^{-3}$ setting outperforms all others for error tolerances from $10^{-2}$ to $10^{-3}$, $r_{M;1} \leq 10^{-6}$ proves best from $10^{-4}$ to $10^{-6}$, and $r_{M;1} \leq 10^{-9}$ takes the remainder. So, by setting the residual tolerance and error tolerance equal, we find a setting which produces the most consistent performance over the largest number of error tolerances. 

Figure \ref{Radv:fig:acprecisionext} takes the fixed basis results and $R = \text{tol}$ results from figure \ref{Radv:fig:acprecision} and compares them with and without basis extension.  Clearly, by adding the extension to the basis at each stage, we dramatically increase the stability of the ROK method allowing for larger timesteps, resulting in lines that are less vertical and, for low error tolerances, speedups nearing $10\times$.  Also, combining the basis extension and adaptive basis size gives excellent results, with the $R = \text{tol ext}$ setting providing the best performance for error tolerances tighter than $10^{-4}$. For the stiffest problem in figure \ref{Radv:fig:acprecisionextbigs}, the combination of these enhancements reduces the simulation time from several hours for the fixed basis tests to a few hundred seconds.

\section{Conclusions}
\label{Radv:sec:conclusions}

This work presents two new strategies to adaptively select the underlying Krylov space in Rosenbrock-Krylov schemes.  These approaches are based on a linear stability analysis that considers the impact of approximate solutions of the stage linear systems on the overall stability of the Rosenbrock-Krylov method.  The first strategy is to adaptively extend the dimension of the underlying Krylov basis, in a manner similar to standard approaches for other linearly-implicit methods; however, the motivation here are stability and not accuracy considerations.  The second approach augments the Krylov basis with the stage right hand side vectors; this approach is unique to Rosenbrock-Krylov methods and may provide the foundation for future basis enlargement strategies.  Numerical experiments confirm that both adaptive strategies individually improve the efficiency of the Rosenbrock-Krylov methods, with the basis augmentation strategy being particularly effective.

Future work will focus on developing a theoretical explanation of the dramatic improvements in efficiency provided by augmenting the Krylov space with stage right hand side vectors. Such a theory is likely to provide valuable insight into the stability properties of Rosenbrock-Krylov integrators.  Moreover, a linear stability analysis which leverages the specific form of the Rosenbrock-Krylov scheme may yield new implementation strategies.

\section*{Acknowledgements}

This work has been supported in part by NSF through awards NSF DMS–1419003 and
NSF CCF–1613905, AFOSR through the award
 AFOSR DDDAS 15RT1037,
and by the Computational Science Laboratory at Virginia Tech.

\section*{References}

\bibliographystyle{plain}
\bibliography{Master,ode_krylov,pde_time_implicit,ode_general,ode_Multirate,ode_exponential,ode_rosenbrock,sandu}
\end{document}